\newtheorem{theorem}{Theorem}[section]
\newtheorem{lemma}[theorem]{Lemma}
\newtheorem{corollary}[theorem]{Corollary}
\newtheorem{proposition}[theorem]{Proposition}
\theoremstyle{definition}
\newtheorem{definition}[theorem]{Definition}
\newtheorem{remark}[theorem]{Remark}
\newtheorem*{acknowledgement}{Acknowledgements}       %\renewcommand{\theack}{} 
\numberwithin{equation}{section}
\begin{document}

\title[CAT(0) spaces on which a certain type of singularity is bounded]{CAT(0) spaces on which a certain type of singularity is bounded }
\author[Tetsu Toyoda]{Tetsu Toyoda}
\address{Graduate School of Mathematics\\ 
Nagoya University, Chikusa-ku Nagoya 464-8602 Japan}
\email{tetsu.toyoda@math.nagoya-u.ac.jp}
\subjclass[2000]{53C20, 20F65}
\keywords{$\mathrm{CAT}(0)$ space, $\mathrm{CAT}(1)$ space, fixed-point property, Izeki-Nayatani invariant}

\maketitle

\begin{center}
(to appear in Kodai Math. J.  \textbf{33} (2010) )
\end{center}

%\tableofcontents

\begin{abstract}
In this paper, we will consider a family $\mathcal{Y}$ of complete $\mathrm{CAT}(0)$ spaces 
such that the tangent cone $TC_p Y$ at each point $p\in Y$ of each $Y\in\mathcal{Y}$ 
is isometric to a (finite or infinite) product of the Euclidean cones $\mathrm{Cone}(X_{\alpha})$ 
over elements $X_{\alpha}$ of some 
Gromov-Hausdorff precompact family $\{ X_{\alpha}\}$ of $\mathrm{CAT}(1)$ spaces. 
Each element 
of such $\mathcal{Y}$ 
is a space presented by Gromov \cite{G} as an example of a 
``CAT(0) space with 
``bounded" singularities". 
We will show that the Izeki-Nayatani invariants of 
spaces in such a family are uniformly 
bounded from above by a constant strictly less than $1$.
\end{abstract}
\setlength{\baselineskip}{5mm}

\section{Introduction}
\label{intro}

In \cite{G}, Gromov introduced the term 
``CAT(0) space with 
`bounded' singularities", and 
remarked that there exist infinite groups 
which admit no uniform embeddings 
into such a space. 
He used this terminology 
without providing its precise definition, 
but as examples of such spaces, he presented 
$\mathrm{CAT}(0)$ spaces $Y$ 
such that the tangent cone $TC_p Y$ at each point $p\in Y$ is isometric to a (finite or infinite) 
product of Euclidean cones $\mathrm{Cone}(X_{\alpha})$ over elements $X_{\alpha}$ of some
Gromov-Hausdorff precompact family $\{ X_{\alpha}\}$ of $\mathrm{CAT}(1)$ spaces.

On the other hand, 
Izeki and Nayatani \cite{IN} defined an invariant 
$\delta (Y)\in\lbrack 0,1\rbrack$
of 
a complete $\mathrm{CAT} (0)$ space $Y$. 
And some general results for 
$\mathrm{CAT}(0)$ spaces whose Izeki-Nayatani invariants are bounded from above 
were proved by 
Izeki, Kondo, and Nayatani 
(\cite{IN}, \cite{IKN}, \cite{IKN2}, \cite{K}, \cite{K2}). 
Group $\Gamma$ is said to have the \textit{fixed-point property} for a metric space $Y$, 
if for any group homomorphism $\rho :\Gamma\to\mathrm{Isom}(Y)$ 
there exists a point $p\in Y$ such that 
$\rho (\gamma )p =p$ for all $\gamma\in\Gamma$. 
Izeki, Kondo and Nayatani \cite{IKN2} proved that 
a random group of Gromov's graph model has the fixed-point property for all elements $Y$ of 
a family $\mathcal{Y}$ of $\mathrm{CAT}(0)$ spaces whose Izeki-Nayatani invariants are 
uniformly bounded from above by a constant strictly less than $1$: 
$$
\sup\{\delta (Y)\hspace{1mm}|\hspace{1mm} Y\in\mathcal{Y}\}<1 . 
$$
Moreover, it is straightforward to see that an expander admits no 
uniform embedding into a complete CAT(0) space $Y$ with $\delta(Y)<1$
(see \cite{K2}). 
Combining this with Gromov's argument in \cite{G},  
the existence of infinite groups which admit 
no uniform embeddings into a space $Y$ with $\delta (Y)<1$ follows. 
This seems to suggest that 
the Izeki-Nayatani invariant 
measures a certain type of ``singularity" 
similar to Gromov's notion. 

Although these general results were proved, 
the computation of the Izeki-Nayatani invariant is difficult. 
It is still unclear what kind of $\mathrm{CAT}(0)$ spaces $Y$ or 
families  $\mathcal{Y}$ of $\mathrm{CAT}(0)$ spaces have 
the boundedness property as above. 
It had been even unknown whether there exists a complete CAT(0) space $Y$
with $\delta (Y)=1$ or not, until 
Kondo \cite{K2} showed the existence of CAT(0) spaces with $\delta =1$
fairly recently.

In this paper, we prove the following theorem.

\begin{theorem}\label{main}
Let $\mathcal{Y}$ be 
a family of complete $\mathrm{CAT}(0)$ spaces 
such that the tangent cone $TC_p Y$ at each point $p\in Y$ on each $Y\in\mathcal{Y}$ 
is isometric to a (finite or infinite) product of the Euclidean cones $\mathrm{Cone}(X_{\alpha})$ 
over elements $X_{\alpha}$ of some 
Gromov-Hausdorff precompact family $\{ X_{\alpha}\}$ of complete $\mathrm{CAT}(1)$ spaces. 
Then we have 
$$
\sup_{Y\in\mathcal{Y}}\delta (Y)<1 .
$$
\end{theorem}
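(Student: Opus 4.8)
The plan is to reduce, using tangent cones and products, to a single Euclidean cone $\mathrm{Cone}(X)$, and then to exploit Gromov--Hausdorff precompactness to bound that single case uniformly. Recall from \cite{IN} that for a complete $\mathrm{CAT}(0)$ space $Z$ one has $\delta(Z)=\sup_{\mu}\delta(\mu)$, where $\mu$ runs over the finitely supported probability measures on $Z$ with barycentre $\bar\mu$ and
\[
\delta(\mu)=\inf_{\varphi}\frac{\bigl\|\int_{Z}\varphi\,d\mu\bigr\|^{2}}{\int_{Z}\|\varphi\|^{2}\,d\mu},
\]
the infimum being over the maps $\varphi\colon\operatorname{supp}\mu\to H$ into a Hilbert space with $\|\varphi(x)\|=d(x,\bar\mu)$ and $\|\varphi(x)-\varphi(y)\|\le d(x,y)$. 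I will use repeatedly that $\delta$ is unchanged under rescaling the metric, and that for fixed $\varphi$ the above ratio depends continuously on $\mu$, so that $\mu\mapsto\delta(\mu)$ is upper semicontinuous; with these properties $\delta$ also makes sense for compactly supported Borel probability measures.

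First I would prove two inequalities. (a) For any complete $\mathrm{CAT}(0)$ space $Y$, $\delta(Y)\le\sup_{p\in Y}\delta(TC_{p}Y)$: given $\mu$ with barycentre $p$, for $t\in(0,1]$ let $\mu_{t}$ be the push-forward of $\mu$ by $x\mapsto$ the point of $[p,x]$ at distance $t\,d(p,x)$ from $p$; the first-variation characterisation of the barycentre gives $\bar\mu_{t}=p$, and $\mathrm{CAT}(0)$ comparison makes this map $t$-Lipschitz, so rescaling a competitor for $\mu_{t}$ by $t^{-1}$ yields one for $\mu$ with the same ratio, whence $\delta(\mu)\le\delta(\mu_{t})$; rescaling the metric by $t^{-1}$ and letting $t\to0$, the $\mu_{t}$ converge to a measure on $TC_{p}Y$, and scale invariance together with upper semicontinuity give $\delta(\mu)\le\delta(TC_{p}Y)$. (b) For an $\ell^{2}$-product $Z=\prod_{\alpha}Z_{\alpha}$ of complete $\mathrm{CAT}(0)$ spaces, $\delta(Z)\le\sup_{\alpha}\delta(Z_{\alpha})$: the marginals $\mu_{\alpha}$ of $\mu$ have barycentres the coordinates of $\bar\mu$, and if each $\varphi_{\alpha}$ is a competitor for $\mu_{\alpha}$ with ratio $\le\delta(Z_{\alpha})+\varepsilon$ then $\varphi=\bigoplus_{\alpha}\varphi_{\alpha}\colon\operatorname{supp}\mu\to\bigoplus_{\alpha}H_{\alpha}$ again satisfies the norm and Lipschitz conditions, because the product metric adds in squares, and
\[
\frac{\bigl\|\int\varphi\,d\mu\bigr\|^{2}}{\int\|\varphi\|^{2}\,d\mu}=\frac{\sum_{\alpha}\bigl\|\int\varphi_{\alpha}\,d\mu_{\alpha}\bigr\|^{2}}{\sum_{\alpha}\int\|\varphi_{\alpha}\|^{2}\,d\mu_{\alpha}}\le\sup_{\alpha}\delta(Z_{\alpha})+\varepsilon
\]
(discarding the $\alpha$ for which $\mu_{\alpha}$ is a point mass); letting $\varepsilon\to0$ gives the claim. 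Combining (a) and (b) with the hypothesis $TC_{p}Y\cong\prod_{\alpha}\mathrm{Cone}(X_{\alpha})$ yields $\delta(Y)\le\sup_{\alpha}\delta(\mathrm{Cone}(X_{\alpha}))$ for all $Y\in\mathcal Y$, so the theorem reduces to showing $\sup_{\alpha}\delta(\mathrm{Cone}(X_{\alpha}))<1$.

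For this I would use Gromov's precompactness theorem: the diameters and $\varepsilon$-covering numbers of the $X_{\alpha}$ are bounded uniformly in $\alpha$, so each $X_{\alpha}$ is compact, the Gromov--Hausdorff closure $\mathcal X$ of $\{X_{\alpha}\}$ is compact, and every member of $\mathcal X$ is a compact $\mathrm{CAT}(1)$ space (the $\mathrm{CAT}(1)$ inequality passes to Gromov--Hausdorff limits). It then remains to prove: (i) $\delta(\mathrm{Cone}(X))<1$ for every compact $\mathrm{CAT}(1)$ space $X$; and (ii) $X\mapsto\delta(\mathrm{Cone}(X))$ is upper semicontinuous on $\mathcal X$ for Gromov--Hausdorff convergence, since a $\varepsilon_{i}$-approximation transports a finitely supported measure from $\mathrm{Cone}(X_{i})$ to $\mathrm{Cone}(X)$ while distorting the finitely many relevant distances by $O(\varepsilon_{i})$, and $\delta$ of such a measure (supported in a fixed ball after rescaling) depends continuously on those distances. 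Granting (i) and (ii), the function $X\mapsto\delta(\mathrm{Cone}(X))$ is upper semicontinuous on the compact space $\mathcal X$ with values in $[0,1)$, hence attains a maximum $<1$, and therefore $\sup_{\alpha}\delta(\mathrm{Cone}(X_{\alpha}))\le\max_{X\in\mathcal X}\delta(\mathrm{Cone}(X))<1$, proving the theorem.

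The hard part is (i): a single Euclidean cone over a compact $\mathrm{CAT}(1)$ space has Izeki--Nayatani invariant strictly less than $1$ --- this is precisely where ``boundedness'' of the singularity is used. The model case is a measure $\mu$ on $\mathrm{Cone}(X)$ balanced at the apex, to which (a) applied inside $\mathrm{Cone}(X)$ partially reduces the general case: using total boundedness of $X$ one fixes a finite $\varepsilon$-net of $X$, builds from a subordinate partition of unity an explicit Hilbert-space-valued competitor $\varphi$, and estimates its quotient, the first-variation (balance) condition at the apex forcing it below $1$ by an amount controlled by the net; done uniformly in $\mu$, such a bound would depend only on the covering data of $X$ and would also subsume (ii). The genuinely delicate point is to handle measures whose barycentre is \emph{not} the apex: there (a) reduces matters only to cones over the spaces of directions of $X$, which are $\mathrm{CAT}(1)$ but need not be compact, so the net construction must be carried out directly at a general barycentre rather than by iterating (a). The remaining ingredients are soft: (b) is the inequality $\frac{\sum a_{\alpha}}{\sum b_{\alpha}}\le\sup_{\alpha}\frac{a_{\alpha}}{b_{\alpha}}$ packaged with a Hilbert-space direct sum, and (a) and (ii) are blow-up and approximation arguments that use only the scale invariance and upper semicontinuity of $\delta$.
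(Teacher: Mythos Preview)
Your reduction through (a) and (b) is the same skeleton the paper uses, but you lose a crucial refinement at step (a): the limit measure you produce on $TC_{p}Y$ has barycentre at the cone point $O$, and the paper records exactly this, obtaining $\delta(\nu)\le\delta(TC_{\bar\nu}Y,\,O)$ (Lemma~\ref{conedelta}). The product inequality (Lemma~\ref{product}) is then proved for the apex-centred quantity $\delta(\cdot,O)$, so one is reduced to bounding $\delta(\mathrm{Cone}(X_\alpha),O)$ uniformly, \emph{never} the full $\delta(\mathrm{Cone}(X_\alpha))$. This is precisely the point where you run into your ``genuinely delicate'' issue of barycentres away from the apex: that issue simply does not arise in the paper's setup, and your attempt to resolve it by iterating (a) fails for the reason you yourself note (spaces of directions of a compact $\mathrm{CAT}(1)$ space need not be compact).

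With the reduction to apex-centred measures in hand, the paper does not argue by Gromov--Hausdorff compactness and upper semicontinuity as you propose; instead it makes a further reduction (Proposition~\ref{deltatildedelta}) from $\delta(\mathrm{Cone}(X),O)$ to an invariant $\tilde\delta(X)$ living on $X$ itself (measures supported on the link, i.e.\ all $r_i=1$), and then gives an \emph{explicit} competitor: the Kuratowski-type map $F_S(x)=(d_X(x,x_i))_{i=1}^{N}$ built from a $\pi/12$-net $S$, composed with the Gaussian-kernel embedding $G_t$ into the unit sphere of $\mathrm{Exp}(\mathbb{R}^{N})$. A direct estimate (Proposition~\ref{pi/12}, using Corollary~\ref{pi/3-2/3} for the balance condition at the apex) yields $\tilde\delta(X)\le C(N)<1$ with $C(N)$ depending only on the size $N$ of the net. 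Since uniform total boundedness gives a uniform $N$, this is already the uniform bound, and no semicontinuity argument is needed. In short: your (i) is the heart of the matter and is not proved in your proposal; the paper proves it by restricting to apex-centred (indeed link-supported) measures and constructing an explicit Hilbert-sphere map whose quality is controlled by the covering number alone.
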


Here, we use the word \textit{product} of Euclidean cones 
$T_1 ,T_2 , \ldots$ 
in the sense of $\ell^2$-product of the pointed metric spaces $(T_1 ,O_1 ), (T_2 ,O_2 ),\ldots$, 
where each $O_n$ is the cone point of $T_n$. 
That is, the product $T$ of the cones $T_1 ,T_2 ,\ldots$ 
consists of all sequences $(x_n )_n$ such that $x_n \in T_n$ and 
$\sum_{n}d_n (O_n ,x_n )^2 <\infty$, 
and $T$ is equipped with the metric function $d$
defined by 
$$
d(x,y )^2 =\sum_{n=1}^{\infty}d_n (x_n , y_n )^2
$$
for any $x=(x_1 ,x_2 ,\ldots )\in T$ and any $y=(y_1 ,y_2 ,\ldots )\in T$, 
where $d_n$ is the metric function on $T_n$ for each $n$. 
Then, 
$T$ also has a cone structure with the cone point 
$O=(O_1 ,O_2 ,\ldots )$. 
And completeness and $\mathrm{CAT}(0)$ condition are preserved by this construction. 

Combining Theorem \ref{main} with the general results mentioned above, we have the following corollary. 

\begin{corollary}
{\boldmath $\mathrm{(i)}$}\hspace{1mm}
If $Y$ is 
a complete $\mathrm{CAT}(0)$ space 
such that the tangent cone at each point $y\in Y$ is isometric to a (finite or infinite)
product of Euclidean cones $\mathrm{Cone}(X_{\alpha})$ over elements $X_{\alpha}$ of some
Gromov-Hausdorff precompact family $\{ X_{\alpha}\}$ of $\mathrm{CAT}(1)$ spaces, 
then there exists infinite groups which admit no uniform embeddings into $Y$. 
\hspace{1.2mm}{\boldmath $\mathrm{(ii)}$}\hspace{1mm}
There exist infinite groups which has the fixed-point property for all
elements $Y$ in such a family $\mathcal{Y}$ as in Theorem 1.1.
\end{corollary}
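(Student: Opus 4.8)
The plan is to reduce, in two routine steps, the estimate of $\delta(Y)$ for $Y\in\mathcal{Y}$ to a single uniform estimate for the Euclidean cones $\mathrm{Cone}(X_\alpha)$, and then to extract that estimate from the Gromov--Hausdorff precompactness of $\{X_\alpha\}$. Throughout I use that $\delta(Y)$ is the supremum, over finitely supported probability measures $\mu$ on $Y$ with barycenter $b$, of the quantity $\delta(\mu):=\inf_\varphi|\int_Y\varphi\,d\mu|^2\big/\int_Y|\varphi|^2\,d\mu$, where $\varphi$ ranges over the maps from $\mathrm{supp}(\mu)$ into a Hilbert space satisfying $|\varphi(x)|=d(x,b)$ for all $x$ and $|\varphi(x)-\varphi(y)|\le d(x,y)$ for all $x,y$; thus $\int|\varphi|^2\,d\mu=\int d(\cdot,b)^2\,d\mu$ depends only on $\mu$, each $\delta(\mu)$ lies in $[0,1]$, and $\delta$ is invariant under rescaling of the metric.

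\emph{Step 1: localization to tangent cones.} Given $\mu$ on $Y$ with barycenter $b$, I would blow it up to $TC_bY$, setting $\tilde x:=\exp_b^{-1}(x)$ for $x\in\mathrm{supp}(\mu)$ and letting $\tilde\mu$ be the push-forward measure. The $\mathrm{CAT}(0)$ comparison (Alexandrov angle at most comparison angle) gives $d_{TC_bY}(\tilde x,\tilde y)\le d_Y(x,y)$, while $d_{TC_bY}(\tilde x,O)=d_Y(x,b)$, and the first-variation inequality satisfied by the barycenter shows that the cone point $O$ is the barycenter of $\tilde\mu$. Consequently every map admissible for $\tilde\mu$ is admissible for $\mu$ with the same ratio, so $\delta(\mu)\le\delta(\tilde\mu)\le\delta(TC_bY)$; taking the supremum over $\mu$ gives
\[
\delta(Y)\ \le\ \sup_{p\in Y}\delta(TC_pY),
\]
and by hypothesis each $TC_pY$ is an $\ell^2$-product of cones $\mathrm{Cone}(X_\alpha)$ with the $X_\alpha$ drawn from the one fixed family.

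\emph{Step 2: the product formula.} Next I would prove $\delta\big(\prod_nT_n\big)\le\sup_n\delta(T_n)$ for an $\ell^2$-product of pointed $\mathrm{CAT}(0)$ spaces. For a finitely supported $\mu$ on $T=\prod_nT_n$ with barycenter $b=(b_n)_n$, the coordinate push-forwards $\mu_n$ are finitely supported with barycenters $b_n$, since barycenters in an $\ell^2$-product are taken coordinatewise. If $\varphi_n$ is admissible for $\mu_n$, the direct sum $\varphi=\bigoplus_n\varphi_n$ lands in $\bigoplus_n\mathcal{H}_n$ because $\sum_nd_n(x_n,b_n)^2=d(x,b)^2<\infty$, is admissible for $\mu$, and makes $|\varphi|^2$ and $|\int\varphi\,d\mu|^2$ additive in $n$. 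Writing $v_n:=\int d_n(\cdot,b_n)^2\,d\mu_n$, so that $\sum_nv_n=\int d(\cdot,b)^2\,d\mu<\infty$, and optimizing the $\varphi_n$ separately gives
\[
\delta(\mu)\ \le\ \Big(\sum_n\delta(\mu_n)\,v_n\Big)\Big/\Big(\sum_nv_n\Big)\ \le\ \sup_n\delta(\mu_n)\ \le\ \sup_n\delta(T_n),
\]
all the series converging since $\delta(\mu_n)\le1$. Combining Steps 1 and 2, the theorem reduces to $\sup_\alpha\delta\big(\mathrm{Cone}(X_\alpha)\big)<1$.

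\emph{Step 3: the cone estimate, and the main obstacle.} Since $\{X_\alpha\}$ is Gromov--Hausdorff precompact, each $X_\alpha$ is compact and, for each $\varepsilon>0$, the covering numbers $N_{X_\alpha}(\varepsilon)$ are bounded uniformly in $\alpha$; fix such an $\varepsilon$ and put $M:=\sup_\alpha N_{X_\alpha}(\varepsilon)<\infty$. The remaining task — and this is the hard part, since $\delta$ is defined by a $\sup$--$\inf$ that resists direct computation — is to show that for a compact $\mathrm{CAT}(1)$ space $X$ one has $\delta(\mathrm{Cone}(X))\le c(\varepsilon,N_X(\varepsilon))$ for some function $c(\cdot,\cdot)$ with $c(\varepsilon,M)<1$. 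The scheme I would follow is: use an $\varepsilon$-net $N\subset X$ to build a map $\Psi\colon X\to\mathcal{H}$ into a Hilbert space for which $\langle\Psi(\xi),\Psi(\xi')\rangle$ approximates the truncated comparison quantity $\cos(\min(d_X(\xi,\xi'),\pi))$, with an error controlled — via the $\mathrm{CAT}(1)$ condition — by $\varepsilon$ and $\#N$; then, for any finitely supported $\mu$ on $\mathrm{Cone}(X)$ with barycenter $b$, the map $(r,\xi)\mapsto r\,\Psi(\xi)$, corrected to be norm-exact and $1$-Lipschitz and re-centered at the image of $b$, is admissible for $\mu$ and has ratio bounded by a function of $(\varepsilon,\#N)$ that stays strictly below $1$. (When $\cos(\min(d_X(\cdot,\cdot),\pi))$ is already positive semidefinite, this map is an isometry and the ratio is $0$, because a complete geodesic $\mathrm{CAT}(0)$ space isometrically embedded in a Hilbert space has closed convex image, so its barycenter agrees with the Hilbert-space mean.) Since the estimate depends on $X$ only through $N_X(\varepsilon)\le M$, it is uniform over the family: $\sup_\alpha\delta(\mathrm{Cone}(X_\alpha))\le c(\varepsilon,M)<1$, which is the theorem. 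One may instead argue by contradiction: were this supremum equal to $1$, pass to a Gromov--Hausdorff convergent subsequence $X_{\alpha_k}\to X_\infty$ with $X_\infty$ a compact $\mathrm{CAT}(1)$ space, use upper semicontinuity of $X\mapsto\delta(\mathrm{Cone}(X))$ under Gromov--Hausdorff convergence to deduce $\delta(\mathrm{Cone}(X_\infty))=1$, and contradict the pointwise bound. I expect the delicate points to be the norm-exact, $1$-Lipschitz correction of the near-isometric map and the quantitative control of the comparison data on the net that keeps the ratio below $1$ as the net is refined and lets it pass to Gromov--Hausdorff-close spaces; compactness of the $X_\alpha$ is indispensable here, as by Kondo \cite{K2} a single non-compact $\mathrm{CAT}(1)$ link already produces a cone with $\delta=1$.
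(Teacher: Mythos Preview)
First, a structural mismatch: you are proving Theorem~1.1, not the Corollary. The paper derives the Corollary from Theorem~1.1 in one sentence by invoking external results --- that an expander does not uniformly embed into a complete $\mathrm{CAT}(0)$ space with $\delta<1$ (for part~(i), via \cite{G} and \cite{K2}), and that random groups in Gromov's graph model have the fixed-point property for every $Y$ in a family with $\sup_Y\delta(Y)<1$ (for part~(ii), via \cite{IKN2}). Your outline never makes this connection.

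As a proof of Theorem~1.1, your Steps~1 and~2 reproduce the paper's Lemmas~4.2 and~4.3 essentially verbatim, so the reduction to $\sup_\alpha\delta(\mathrm{Cone}(X_\alpha),O)<1$ is fine. But Step~3 has a genuine gap: you propose a near-isometric $\Psi$ and then say you will ``correct'' it to be norm-exact and $1$-Lipschitz, without indicating how or why this correction preserves a quantitative bound strictly below~$1$; you yourself flag this as the delicate point and leave it unresolved. The paper avoids any such correction. It first reduces $\delta(\mathrm{Cone}(X),O)$ to a spherical invariant $\tilde\delta(X)$ by showing (Lemmas~4.5--4.6, Proposition~4.4) that any cone measure with barycenter $O$ can be radially rescaled, without decreasing $\delta$, to one supported on $\iota(X)$. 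It then bounds $\tilde\delta(X)$ (Proposition~5.1) via the explicit map $\phi=G_{1/(2N)}\circ F_S$, where $F_S(x)=(d_X(x,x_i))_i\in\mathbb{R}^N$ is the Kuratowski map with respect to a $\pi/12$-net and $G_t$ is the Gaussian-kernel map to the unit sphere of $\mathrm{Exp}(\mathbb{R}^N)$. This $\phi$ \emph{automatically} satisfies $\|\phi\|=1$ and $\angle(\phi(x),\phi(y))\le\min(\pi,d_X(x,y))$ (Lemma~5.3), so no correction is needed, and it separates pairs with $d_X\ge\pi/3$ by a definite angle depending only on $N$. Combined with the fact that a measure on $X$ whose cone pushforward has barycenter $O$ puts mass at most $2/3$ in any $\pi/3$-ball (Corollary~3.3), this gives $\|\mathbb{E}_\mu[\phi]\|\le c_N<1$. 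Your fallback via Gromov--Hausdorff upper semicontinuity still requires the pointwise bound $\delta(\mathrm{Cone}(X_\infty))<1$ for a single compact $\mathrm{CAT}(1)$ space --- which is exactly the same hard step --- and the claimed semicontinuity is not established here.
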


Here, 
\textbf{(i)} has already been remarked in \cite{G}. 
And \textbf{(ii)} follows from the general result in \cite{IKN2}. 
\textbf{(ii)} can be stated in terms of random groups(see \cite{IKN2}).

In the end of this paper, we claim that 
by the same technique used in the proof of Theorem \ref{main}, 
we can prove a more general statement, 
which includes Theorem \ref{main} as a special case (Proposition \ref{sufficient}).

\section{Preliminaries on $\mathrm{CAT}(0)$ spaces}

In this section we recall some basic definitions and facts 
concerning 
$\mathrm{CAT}(0)$ spaces. 
For a detailed exposition, 
we refer the reader to \cite{BH}, \cite{BBI} or \cite{S}. 

For $\kappa >0$ let $M_{\kappa}^2$ denote the 
simply connected, complete 2-dimensional Riemannian manifold of 
constant Gaussian curvature $\kappa$, and let  
$d_{\kappa}$ be its distance function. 
Let $D_{\kappa}\in (0,\infty\rbrack$ be the 
diameter of $M_{\kappa}^2$. 

Let $(Y,d_Y)$ be a metric space. 
A \textit{geodesic} in $Y$  
is an isometric embedding $\gamma$ of a 
closed interval $\lbrack a,b\rbrack$ into $Y$. 
A \textit{geodesic triangle} in $Y$ is a triple 
$\triangle =(\gamma_1 ,\gamma_2 ,\gamma_3 )$ 
of geodesics 
$\gamma_i :\lbrack a_i,b_i \rbrack\to Y$ 
such that 
$$
\gamma_1 (b_1)=\gamma_2 (a_2),\quad\gamma_2 (b_2)=\gamma_3 (a_3), 
\quad\gamma_3 (b_3)=\gamma_1 (a_1) .
$$
If $\triangle$ has 
a perimeter less than $2 D_{\kappa}$: 
$\sum_{i=1}^3 |b_i -a_i | <2 D_{\kappa}$,  
then
there is a geodesic triangle 
$$
\triangle^{\kappa}=
(\gamma^{\kappa}_1 ,\gamma^{\kappa}_2 ,
\gamma^{\kappa}_3 ),\quad
\gamma_i :\lbrack a_i,b_i \rbrack\to M_{\kappa}^2
$$
in $M_{\kappa}^2$
, which has the same side lengths as $\triangle$.  
This triangle $\triangle^{\kappa}$ is unique up to isometry of $M_{\kappa}^2$, 
and 
we call it the \textit{comparison triangle} of $\triangle$ in 
$M_{\kappa}^2$. 
Then $\triangle$ is said to be $\kappa$-\textit{thin} 
if 
$$
d_Y (\gamma_i (s),\gamma_j (t))\leq
d_{\kappa}(\gamma^{\kappa}_i (s),\gamma^{\kappa}_j (t))
$$
whenever $i,j\in\{ 1,2,3\}$ and $s\in\lbrack a_i ,b_i \rbrack$, 
and $t\in\lbrack a_j ,b_j \rbrack$.

\begin{definition}
A metric space $(Y,d)$ is called a $\mathit{CAT (\kappa )}$ \textit{space}, 
if for any pair of points $p,q\in Y$ 
with $d(p,q)<D_{\kappa}$ 
there exists a geodesic from $p$ to $q$, 
and any geodesic triangle in $Y$ with 
perimeter$< 2D_{\kappa}$ 
is $\kappa$-thin. 
\end{definition}

Next, we recall the definition of the Euclidean cone. 
Let $(X,d_X )$ be a metric space. 
The cone $\mathrm{Cone}(X)$ over $X$ is 
the quotient of the product $X\times\lbrack 0,\infty )$ 
obtained by identifying all points in 
$X\times\{ 0\}\subset X\times\lbrack 0,\infty )$. 
The point represented by $(x,0)$ is called the \textit{cone point} of $\mathrm{Cone}(X)$ and 
we will denote this point by $O_{\mathrm{Cone}(X)}$ in this paper.  
The cone distance $d_{\mathrm{Cone}(X)} (v,w)$ 
between two points $v,w\in\mathrm{Cone}(X)$ 
represented by $(x,t),(y,s)\in X\times\lbrack 0,\infty )$ 
respectively, is defined by 
$$
d_{\mathrm{Cone}(X)} (v,w)= 
\sqrt{t^2 +s^2 -2ts\cos (\min\{\pi ,d_X (x,y)\})} .
$$
Then $(\mathrm{Cone}(X),d_{\mathrm{Cone}(X)})$ is a metric space, and we call it  
the \textit{Euclidean cone} over $(X,d_X )$. 
It is known that a metric space 
$(X,d_X )$ is a $\mathrm{CAT}(1)$ space 
if and only if 
$(\mathrm{Cone}(X),d_{\mathrm{Cone}(X)})$ 
is a $\mathrm{CAT}(0)$ space. 

Suppose that $Y$ is a $\mathrm{CAT}(0)$ space. 
Then by the definition of $\mathrm{CAT}(0)$ space, there is a unique geodesic joining any pair of points in $Y$. 
So, for any triple of points $(p,q,r)$ in $Y$, 
it makes sense to denote by $\triangle (p,q,r)$ 
the geodesic triangle consisting of 
three geodesics joining each pair of the three points. 

Let $\gamma :\lbrack a,b\rbrack\to Y$, 
$\gamma' :\lbrack a',b' \rbrack\to Y$
be two geodesics in a $\mathrm{CAT}(0)$ space $Y$ such that 
$$
\gamma (a)=\gamma' (a')=p\in Y .
$$ 
We define the \textit{angle} $\angle_p (\gamma ,\gamma' )$ 
between $\gamma$, $\gamma'$ as 
$$
\angle_p (\gamma ,\gamma' )=
\lim_{t\to a,t'\to a'}\angle^{0}_p (\gamma(t),\gamma(t') ) ,
$$ 
where $\angle^{0}_p (\gamma(t),\gamma(t') ) $ is 
the corresponding angle of 
the comparison triangle of
\linebreak
$\triangle (p,\gamma (t),\gamma' (t'))$ in $M_{0}^2 =\mathbb{R}^2$. 
The existence of the limit follows from the definition of 
$\mathrm{CAT}(0)$ space.

\begin{definition}
Let $(Y,d_Y )$ be a complete 
$\mathrm{CAT}(0)$ space, and let $p\in Y$. 
We denote by $(S_p Y )^{\circ}$ the set of all geodesics 
$\gamma :\lbrack a,b\rbrack\to Y$ 
such that $\gamma (a)=p$. 
Then the angle $\angle_p$ defines a pseudometric on 
$(S_p Y )^{\circ}$. The \textit{space of directions} $S_p Y$ at $p$ is 
the metric completion of the quotient space of $(S_p Y )$ where we identify any $x,y\in S_p Y$ 
with $\angle_p (x,y)=0$. 
We define the \textit{tangent cone} $TC_p Y$ of $Y$ at $p$ to be the 
Euclidean cone $\mathrm{Cone}(S_p Y )$ over the 
space of directions at $p$.  
\end{definition}

If $(Y,d_Y )$ is a complete $\mathrm{CAT}(0)$ space and if $p\in Y$, 
then it can be proved that the space of directions $S_p Y$ at $p$ is a complete $\mathrm{CAT}(1)$ space. 
Hence, the tangent cone $TC_p Y$ at $p$ is a complete $\mathrm{CAT}(0)$ space. 

Finally, we recall some basic notions and facts about probability measures on a metric space 
$(Y,d_Y )$. 
In this paper, we will treat only finitely supported measures. 
Measure $\nu$ on $Y$ is 
\textit{finitely supported} if there exists 
a finite subset $S\subset Y$ such that $\nu (Y\backslash S)=0$. 
We call the minimal subset $S$ with such a property 
the \textit{support} of $\nu$, and denote it by $\mathrm{supp}(\nu )$. 
We denote by 
$\mathcal{P}(Y)$ 
the set of all finitely supported 
probability measures on $Y$. 
If $\mathrm{supp}(\nu )=\{p_1 ,\ldots , p_n \}$, 
then $\nu$ can be represented as 
\begin{equation}\label{dirac-form}
\nu=\sum_{i=1}^n t_i \mathrm{Dirac}_{p_i}
\end{equation}
by nonnegative real numbers $t_1 ,\ldots , t_n$ with 
$\sum_{i=1}^n t_i=1$, where $\mathrm{Dirac}_{p_i} $ stands for the Dirac measure at $p_i \in Y$. 
We will also use the notation $\mathcal{P}'(Y)$ 
to denote the subset of $\mathcal{P}(Y)$ 
consisting of all measures 
whose supports contain at least two points. 
Let $Z$ be a set and let $\phi :Y\to X$ be a map. 
Then for any $\nu\in\mathcal{P}(Y)$, we define the \textit{pushforward} measure $\phi_* \mu$ on $X$ as 
$$
\phi_* \nu (A)=\mu \left(\phi^{-1}(A)\right) ,\quad A\subset X
$$
If we write $\nu$ as in the form \eqref{dirac-form}, we can write $\phi_* \nu$ as 
$$
\phi_*\nu=\sum_{i=1}^n t_i \mathrm{Dirac}_{\phi (p_i )}
$$
If $(Y,d_Y )$ is a complete $\mathrm{CAT}(0)$ space, 
and if $\nu\in\mathcal{P}(Y)$, 
there exists 
a unique point $\mathrm{bar}(\nu )\in Y$ which minimizes the function 
$$
y\mapsto 
\int_Y d(y,z)^2 \nu (dz)
$$
defined on $Y$. 
This point is called the 
\textit{barycenter} of $\nu$. 
We refer the reader to \cite{S} for the existence and uniqueness of barycenter.

\section{Hilbert sphere valued maps and 
an invariant of a $\mathrm{CAT}(1)$ space}

In this section, we define a certain invariant of complete $\mathrm{CAT}(1)$ spaces. 
First we set up some notations for Hilbert sphere valued maps on $\mathrm{CAT} (1)$ spaces. 
Let $\mathcal{H}$ be a real Hilbert space, 
and let $\phi :X\to\mathcal{H}$ be a map whose image is contained in the unit sphere in $\mathcal{H}$. 
Thus 
$\|\phi (x)\|=1$ for all $x\in X$.  
Let $\mu\in\mathcal{P}(X)$ 
be a finitely supported probability measure on $X$. 
We define the vector $\mathbb{E}_{\mu}\lbrack\phi\rbrack\in\mathcal{H}$ as 
$$ 
\mathbb{E}_{\mu}\lbrack\phi\rbrack
=
\int_{X} \phi (x ) \mu (dx) .
$$
And 
if the vector $\mathbb{E}_{\mu} \lbrack\phi\rbrack$ 
is not the zero vector,  
we denote by 
$\tilde{\mathbb{E}}_{\mu}\lbrack\phi\rbrack$ 
the unit vector parallel to $\mathbb{E}_{\mu}\lbrack\phi\rbrack$: 
$$
\tilde{\mathbb{E}}_{\mu} \lbrack\phi\rbrack =\frac{1}{\| \mathbb{E}_{\mu}\lbrack\phi\rbrack\|}
\mathbb{E}_{\mu} \lbrack\phi\rbrack .
$$
Then the value 
$\|\mathbb{E}_{\mu} \lbrack\phi\rbrack\| \in\lbrack 0,1\rbrack$ 
amounts to a sort of concentration 
of the pushforward measure $\phi_* \mu$ around $\tilde{\mathbb{E}}_{\mu} \lbrack\phi\rbrack$ 
on the unit sphere. 
By simple calculation, we have 
\begin{equation}\label{variance}
\left\|\mathbb{E}_{\mu} \lbrack\phi\rbrack\right\|
=
\int_{X}\langle\tilde{\mathbb{E}}_{\mu} \lbrack\phi\rbrack ,
\phi (x ) \rangle \mu (dx ) 
\end{equation}
whenever $\|\mathbb{E}_{\mu} \lbrack\phi\rbrack\|\neq 0$. 

Now we define an invariant of a complete $\mathrm{CAT}(1)$ space 
by using the notations introduced above. 
This invariant is designed for estimating 
the Izeki-Nayatani invariant of a $\mathrm{CAT}(0)$ space, 
whose definition will be recalled in the next section. 

\begin{definition}
Let $(X,d_X )$ be a metric space, and let 
$\mu\in\mathcal{P}(X)$. 
We define $\tilde{\delta}(\mu )\in\lbrack 0,1\rbrack$ 
to be 
$$
\tilde{\delta}(\mu )
=
\inf_{\phi}\|\mathbb{E}_{\mu}\lbrack\phi\rbrack\|^2 ,
$$
where the infimum is taken over all maps $\phi :X\to\mathcal{H}$ 
to some Hilbert space $\mathcal{H}$ such that
\begin{equation}\label{realization-tilde}
\|\phi (x)\| =1,\quad
\angle\left(\phi (x),\phi (y)\right)
\leq d_X (x,y)
\end{equation}
for any $x,y\in X$. 
Here and henceforth, we denote the angle between two vectors $v,w$ in 
any Hilbert space by $\angle (v,w)$. 

Suppose $(X,d_X)$ is a complete $\mathrm{CAT}(1)$ space and  
$\iota :X\to\mathrm{Cone}(X)$ is the canonical 
inclusion of $X$ into its Euclidean cone.  
Then, we define $\tilde{\delta}(X)$ to be
$$
\tilde{\delta}(X)
=
\sup\{ \tilde{\delta}(\mu )\hspace{1mm}|\hspace{1mm}
\mu\in\mathcal{P}(X),
\mathrm{bar}(\iota_{*}\mu )=O_{\mathrm{Cone}(X)}
\} .
$$
When there is no measure satisfying such a condition, 
we define $\tilde{\delta}(X)=-\infty$. 
\end{definition}

To estimate this invariant in the proceeding sections, 
we will use the following fact: 
\begin{lemma}\label{barycenter}
Let $(X,d_X )$ be a complete $\mathrm{CAT}(1)$ space. 
For $v,w\in\mathrm{Cone}(X)$ represented by $(x,t),(y,s)\in X\times\mathbb{R}$ respectively, 
we set 
$$
\langle v,w\rangle
=
ts\cos\left(\min\{\pi ,d_X (x,y)\}\right) . 
$$
Then for any $\nu\in\mathcal{P}(\mathrm{Cone}(X))$ 
the following two conditions are equivalent: 
\begin{description}
\item[(i)]
$\mathrm{bar}(\nu )=O_{\mathrm{Cone}(X)}$.
\item[(ii)]
$
\int_{\mathrm{Cone}(X)}
\langle E_x ,v\rangle\hspace{1mm}\nu (dv)\leq 0
$, whenever $x\in X$ and $E_x$ is an element of $\mathrm{Cone}(X)$ represented by 
$(x,1)$. 
\end{description}
\end{lemma}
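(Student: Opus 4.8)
The plan is to reduce condition (i) to an explicit one-variable minimization of the energy functional along the rays issuing from the cone point. Two elementary identities, both immediate from the definition of the cone metric, will do the work. First, writing $\|v\| := d_{\mathrm{Cone}(X)}(O_{\mathrm{Cone}(X)},v)$ (so that $\|v\| = t$ when $v$ is represented by $(x,t)$), one has
$$
d_{\mathrm{Cone}(X)}(v,w)^2 = \|v\|^2 + \|w\|^2 - 2\langle v,w\rangle
$$
for all $v,w\in\mathrm{Cone}(X)$, with $\langle\cdot,\cdot\rangle$ the pairing of the statement. Second, this pairing is homogeneous in the radial coordinate: if $w$ is represented by $(y,s)$ and $E_y$ denotes the element represented by $(y,1)$, then $\langle w,v\rangle = s\,\langle E_y,v\rangle$ for every $v$.

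Granting these, I would expand the energy $F(w) := \int_{\mathrm{Cone}(X)} d_{\mathrm{Cone}(X)}(w,v)^2\,\nu(dv)$ at a point $w$ represented by $(y,s)$ to obtain
$$
F(w) = F(O_{\mathrm{Cone}(X)}) + s^2 - 2s\,A_y, \qquad A_y := \int_{\mathrm{Cone}(X)}\langle E_y,v\rangle\,\nu(dv),
$$
using $F(O_{\mathrm{Cone}(X)}) = \int_{\mathrm{Cone}(X)}\|v\|^2\,\nu(dv)$. Since every point of $\mathrm{Cone}(X)$ lies on a ray $s\mapsto(y,s)$ from the cone point, this identity shows that $O_{\mathrm{Cone}(X)}$ minimizes $F$ if and only if $s^2 - 2sA_y\ge 0$ for all $s\ge 0$ and all $y\in X$, i.e. if and only if $A_y\le 0$ for all $y\in X$. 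As the barycenter is by definition the unique minimizer of $F$, this already yields (ii) $\Rightarrow$ (i). For (i) $\Rightarrow$ (ii), if some $A_{y_0}>0$ then taking $w$ represented by $(y_0,s)$ with $0<s<2A_{y_0}$ gives $F(w) < F(O_{\mathrm{Cone}(X)})$, contradicting minimality of $O_{\mathrm{Cone}(X)}$.

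The proof involves no real analytic obstacle — the energy restricted to each ray is computed in closed form, so no appeal to general CAT(0) convexity estimates is needed. The only points demanding care are bookkeeping ones: checking that the two displayed cone identities correctly accommodate the truncation $\min\{\pi,d_X(\cdot,\cdot)\}$ appearing in both the cone metric and the pairing, and noting that testing $F$ along the rays from $O_{\mathrm{Cone}(X)}$ suffices precisely because those rays exhaust $\mathrm{Cone}(X)$.
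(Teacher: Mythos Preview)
Your argument is correct and is essentially the paper's own proof: both expand the energy functional along the radial rays $s\mapsto(y,s)$ issuing from the cone point, obtain the quadratic $s^2-2sA_y$ with $A_y=\int\langle E_y,v\rangle\,\nu(dv)$, and observe that this is nonnegative on $[0,\infty)$ for every $y$ precisely when each $A_y\le 0$. The only cosmetic difference is that the paper phrases the one-variable minimization via the derivative $F'_x(t)$ rather than by inspecting the quadratic directly.
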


\begin{proof}
For $w\in\mathrm{Cone}(X)$ represented by $w=(y,s)\in X\times\mathbb{R}$, 
we write $\| w\| =s$. 
Fix $x\in X$ and let $v_t$ be an element of $\mathrm{Cone}(X)$ represented by 
$(x,t)\in X\times\mathbb{R}$. 
Suppose that $\mathrm{bar}(\nu )=O_{\mathrm{Cone}(X)}$. 
Then 
the function 
\begin{align}
\label{Ft}
F_x (t)
&=
\int_{\mathrm{Cone}(X)} d_{\mathrm{Cone}(X)}(v_t ,w)^2 \nu (dw) \\
&=
\int_{\mathrm{Cone}(X)}
\big\{
t^2 +\| w\|^2 
-2t\langle E_x ,w\rangle
\big\}
\nu (dw), \nonumber
\end{align}
defined on $\lbrack 0,\infty )$
must 
attain its minimum at $t=0$.  
This happens if and only if 
$$
F'_{x}(t)
=
2\left( t-\int_{\mathrm{Cone}(X)}\langle E_x ,w\rangle\hspace{1mm}\nu (dw)\right) \geq 0.
$$
for all $t\in\mathbb{R}$. 
So \textbf{(ii)} follows. 

Conversely, if \textbf{(ii)} holds,  
then  
the function $F_x$ on $\lbrack 0,\infty )$ as \eqref{Ft} attains its minimum 
at $t=0$ for each $x\in X$. 
And it is easily seen that $\mathrm{bar}(\nu )=O_{\mathrm{Cone}(X)}$. 
\end{proof}

In the final section, we will use this lemma in the following form. 

\begin{corollary}\label{pi/3-2/3}
Let $(X,d_X )$ be a complete $\mathrm{CAT}(1)$ space, and let $\iota :X\to\mathrm{Cone}(X)$ 
be the canonical inclusion. 
If $\mu\in\mathcal{P}(X)$ satisfies 
$\mathrm{bar}(\iota_* \mu )=O_{\mathrm{Cone}(X)}$, then 
we have 
$$
\mu \left(
\left\{
y\in X\hspace{1mm}\Big|\hspace{1mm}
d_X (x,y)\leq\theta
\right\}
\right)
\leq\frac{1}{1+\cos\theta}
$$
for any $x\in X$ and any $0\leq\theta <\frac{\pi}{2}$. 
In particular, we have 
$$
\mu \left(
\left\{
y\in X\hspace{1mm}\Big|\hspace{1mm}
d_X (x,y)\leq\frac{\pi}{3}
\right\}
\right)
\leq\frac{2}{3}
$$
for all $x\in X$. 
\end{corollary}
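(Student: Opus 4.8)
The plan is to derive the claimed bound directly from the characterization of barycenter-at-the-cone-point given in Lemma \ref{barycenter}, applied to the pushforward $\nu=\iota_*\mu$. Fix $x\in X$ and $0\le\theta<\frac{\pi}{2}$. Take $E_x\in\mathrm{Cone}(X)$ to be the element represented by $(x,1)$, so that for $w\in\mathrm{Cone}(X)$ represented by $(y,1)$ (i.e.\ $w=\iota(y)$) we have $\langle E_x,w\rangle=\cos(\min\{\pi,d_X(x,y)\})$. By Lemma \ref{barycenter}, condition \textbf{(i)} ($\mathrm{bar}(\iota_*\mu)=O_{\mathrm{Cone}(X)}$) gives
$$
\int_X \cos\bigl(\min\{\pi,d_X(x,y)\}\bigr)\,\mu(dy)\le 0.
$$

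Now split the integral according to whether $d_X(x,y)\le\theta$ or not. On the set $A=\{y\in X\mid d_X(x,y)\le\theta\}$ we have $\cos(\min\{\pi,d_X(x,y)\})\ge\cos\theta>0$ since $0\le\theta<\frac{\pi}{2}$; on the complement $X\setminus A$ we use the trivial bound $\cos(\min\{\pi,d_X(x,y)\})\ge-1$. Writing $a=\mu(A)$, this yields
$$
0\ge\int_X\cos\bigl(\min\{\pi,d_X(x,y)\}\bigr)\,\mu(dy)\ge a\cos\theta-(1-a),
$$
hence $a(1+\cos\theta)\le 1$, i.e.\ $\mu(A)\le\frac{1}{1+\cos\theta}$, which is the first assertion. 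The second assertion follows by taking $\theta=\frac{\pi}{3}$, for which $\cos\theta=\frac12$ and $\frac{1}{1+\cos\theta}=\frac{2}{3}$.

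I do not expect any genuine obstacle here: the only points requiring a little care are (a) checking that $\iota_*\mu$ is indeed a finitely supported probability measure on $\mathrm{Cone}(X)$ so that Lemma \ref{barycenter} applies (immediate, since $\mu\in\mathcal{P}(X)$ and $\iota$ sends each atom $p_i$ to the point represented by $(p_i,1)$), and (b) using the hypothesis $\theta<\frac{\pi}{2}$ to guarantee $\cos\theta>0$ so that the contribution of $A$ is genuinely positive and the inequality $a(1+\cos\theta)\le 1$ is nontrivial. Since $d_X(x,y)\le\theta<\frac{\pi}{2}<\pi$ on $A$, the truncation $\min\{\pi,d_X(x,y)\}$ is irrelevant on $A$, which is why $\cos\theta$ is the correct lower bound there.
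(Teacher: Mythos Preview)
Your proof is correct and follows essentially the same approach as the paper's own proof: both use Lemma~\ref{barycenter} to obtain $\int_X \cos(\min\{\pi,d_X(x,y)\})\,\mu(dy)\le 0$, split the integral over the ball $\{d_X(x,\cdot)\le\theta\}$ and its complement, and apply the bounds $\cos\ge\cos\theta$ and $\cos\ge -1$ respectively. The only cosmetic difference is that the paper argues by contradiction while you argue directly.
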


\begin{proof}
Suppose there is $x_0 \in X$ such that
$$
\mu \left(
\left\{
y\in X\hspace{1mm}\Big|\hspace{1mm}
d_X (x_0 ,y)\leq\theta
\right\}
\right)
>\frac{1}{1+\cos\theta}. 
$$
Then we would have 
\begin{align*}
\int_{X}\cos\left( \min\{\pi ,d_X (x_0 ,x)\}\right) \mu (dx)
&=
\int_{\{x\in X\hspace{0.5mm}|\hspace{0.5mm}d_X (x,x_0 )\leq\theta\}}
\cos\left( \min\{\pi ,d_X (x_0 ,x)\}\right) \mu (dx )
\\
&\hspace{5mm}
+
\int_{X\backslash\{x\in X\hspace{0.5mm}|\hspace{0.5mm}d_X (x,x_0 )\leq\theta\}}
\cos\left( \min\{\pi ,d_X (x_0 ,x)\}\right) \mu (dx )
\\
&>
\cos\theta\times\frac{1}{1+\cos\theta}+(-1)\times\left( 1-\frac{1}{1+\cos\theta}\right)
\\
&=0. 
\end{align*}
This implies $\mathrm{bar}(\iota_{*}\mu )\neq O_{\mathrm{Cone}(X)}$ by Lemma \ref{barycenter}, 
which is a contradiction. 
\end{proof}

\section{Izeki-Nayatani invariant}

In this section, 
we recall the definition of the invariant $\delta$ of a complete $\mathrm{CAT}(0)$ space 
introduced by Izeki and Nayatani \cite{IN}. 
We will then 
derive a relation between $\delta$ and 
the invariant $\tilde{\delta}$ of a complete $\mathrm{CAT}(1)$ space 
defined in the previous section. 
More information about the Izeki-Nayatani invariant $\delta$ can be found in 
\cite{IN}, \cite{IKN}, \cite{IKN2}, \cite{K} and \cite{P}. 

\begin{definition}[\cite{IN}]
Let $(Y,d_Y )$ be a complete $\mathrm{CAT}(0)$ space. 
Recall that $\mathcal{P}'(Y)$ is 
the subset of $\mathcal{P}(Y)$ 
consisting of all measures 
whose supports contain at least two points. 
For any $\nu\in\mathcal{P}'(Y)$, 
we define $\delta (\nu )$ to be 
$$
\delta (\nu )=\inf_{\phi }
\frac{\|\int_{Y}\phi (p)\nu (dp)\|^2}{\int_{Y}\|\phi (p)\|^2 \nu (dp)},
$$
where the infimum is taken over all maps $\phi :\mathrm{supp} (\nu )\to\mathcal{H}$ 
from the support of $\nu$ to some Hilbert space $\mathcal{H}$ such that 
\begin{align}
&\|\phi (p)\| =d(\mathrm{bar}(\nu ),p),\label{umbrellaborn}\\
&\|\phi (p)-\phi (q)\| \leq d(p,q) 
\label{1-lipschitz}
\end{align}
for all $p,q\in\mathrm{supp}(\nu )$. 
Then the Izeki-Nayatani invariant $\delta (Y)$ of $Y$ is defined by 
$$
\delta (Y)=\sup\left\{\delta (\nu)\hspace{1mm}|\hspace{1mm}
\nu\in\mathcal{P}'(Y)\right\}. 
$$
\end{definition}

By definition, we have $0\leq\delta (\nu)\leq 1$ and $0\leq\delta (Y)\leq 1$. 
When $Y$ is a Euclidean cone, 
we define $\delta (Y, O_Y )\in\lbrack 0,1\rbrack$ to be
$$
\delta (Y, O_Y )=
\sup\left\{\delta (\nu)\hspace{1mm}|\hspace{1mm}
\nu\in\mathcal{P}'(Y), \mathrm{bar}(\nu )=O_Y 
\right\} ,
$$ 
where $O_Y$ is the cone point of $Y$. 
When there is no measure satisfying such a condition, we define $\delta (Y, O_Y)=-\infty$. 
The following lemma is shown in \cite{IN}. 

\begin{lemma}[\cite{IN}]\label{conedelta}
Suppose that $Y$ is a complete $\mathrm{CAT} (0)$ space, and 
$\nu\in\mathcal{P}'(Y)$. 
Then we have 
$$
\delta (\nu)\leq\delta (TC_{\mathrm{bar}(\nu )}Y,\hspace{0.8mm}O_{TC_{\mathrm{bar}(\nu )}Y}) .
$$
In particular, we have 
$$
\delta (Y)\leq\sup\{\delta (TC_p Y,\hspace{0.8mm} O_{TC_p Y})\hspace{1mm}|\hspace{1mm}
p\in Y\} .
$$
\end{lemma}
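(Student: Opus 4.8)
The plan is to obtain the ``in particular'' statement from the first inequality by taking suprema (the set of barycenters of measures in $\mathcal{P}'(Y)$ is contained in $Y$), and to prove $\delta(\nu)\le\delta(TC_{\mathrm{bar}(\nu)}Y, O_{TC_{\mathrm{bar}(\nu)}Y})$ for a fixed $\nu\in\mathcal{P}'(Y)$. Put $p_0=\mathrm{bar}(\nu)$ and introduce the logarithm map $L\colon Y\to TC_{p_0}Y$: set $L(p_0)=O_{TC_{p_0}Y}$, and for $p\ne p_0$ let $L(p)$ be the point of $TC_{p_0}Y=\mathrm{Cone}(S_{p_0}Y)$ represented by $(\xi_p,\,d_Y(p_0,p))$, where $\xi_p\in S_{p_0}Y$ is the direction of the geodesic from $p_0$ to $p$. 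The goal is to show that $\lambda:=L_*\nu$ is admissible in the supremum defining $\delta(TC_{p_0}Y,O_{TC_{p_0}Y})$ and that $\delta(\lambda)\ge\delta(\nu)$; since $TC_{p_0}Y$ is the Euclidean cone over the complete $\mathrm{CAT}(1)$ space $S_{p_0}Y$, this yields the lemma.

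I would first record two properties of $L$. (a) $d_{TC_{p_0}Y}(O_{TC_{p_0}Y},L(p))=d_Y(p_0,p)$ for all $p\in Y$, straight from the cone metric. (b) $L$ is $1$-Lipschitz: the law of cosines for the cone distance gives $d_{TC_{p_0}Y}(L(p),L(q))^2=d_Y(p_0,p)^2+d_Y(p_0,q)^2-2d_Y(p_0,p)d_Y(p_0,q)\cos\angle_{p_0}(\xi_p,\xi_q)$ (the angle between directions never exceeds $\pi$), while $d_Y(p,q)^2=d_Y(p_0,p)^2+d_Y(p_0,q)^2-2d_Y(p_0,p)d_Y(p_0,q)\cos\angle^{0}_{p_0}(p,q)$ by the very definition of the Euclidean comparison angle at $p_0$ of the triangle $\triangle(p_0,p,q)$; since $\angle_{p_0}(\xi_p,\xi_q)\le\angle^{0}_{p_0}(p,q)$ in a $\mathrm{CAT}(0)$ space and cosine is decreasing on $[0,\pi]$, property (b) follows. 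Property (b) also makes $L$ injective (equal images force equal distances to $p_0$ and vanishing angle between the two geodesics, whence the geodesics coincide), so $\lambda\in\mathcal{P}'(TC_{p_0}Y)$ because $\nu\in\mathcal{P}'(Y)$.

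The crucial step is $\mathrm{bar}(\lambda)=O_{TC_{p_0}Y}$, which I would check through condition (ii) of Lemma~\ref{barycenter}. Write $\nu=\sum_i t_i\mathrm{Dirac}_{p_i}$, take $\xi\in S_{p_0}Y$ that is the direction of an actual geodesic $\gamma$ issuing from $p_0$, and let $E_\xi$ be the cone element represented by $(\xi,1)$; then $\int_{TC_{p_0}Y}\langle E_\xi,v\rangle\,\lambda(dv)=\sum_i t_i\,d_Y(p_0,p_i)\cos\angle_{p_0}(\xi,\xi_{p_i})$. By the first variation formula for $\mathrm{CAT}(0)$ spaces, the right derivative at $t=0$ of $t\mapsto\int_Y d_Y(\gamma(t),p)^2\,\nu(dp)$ equals $-2\sum_i t_i\,d_Y(p_0,p_i)\cos\angle_{p_0}(\xi,\xi_{p_i})$; as $p_0=\mathrm{bar}(\nu)$ minimizes $q\mapsto\int_Y d_Y(q,p)^2\,\nu(dp)$, this derivative is $\ge 0$, hence $\int_{TC_{p_0}Y}\langle E_\xi,v\rangle\,\lambda(dv)\le 0$. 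An arbitrary $\xi\in S_{p_0}Y$ is a limit of such geodesic directions and $\xi\mapsto\langle E_\xi,v\rangle$ is continuous, so condition (ii) holds for every $\xi\in S_{p_0}Y$, and Lemma~\ref{barycenter} gives $\mathrm{bar}(\lambda)=O_{TC_{p_0}Y}$.

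Finally I would compare the invariants. Let $\psi\colon\mathrm{supp}(\lambda)\to\mathcal{H}$ be any admissible competitor for $\delta(\lambda)$, that is $\|\psi(v)\|=d_{TC_{p_0}Y}(O_{TC_{p_0}Y},v)$ and $\|\psi(v)-\psi(w)\|\le d_{TC_{p_0}Y}(v,w)$, where the first condition uses $\mathrm{bar}(\lambda)=O_{TC_{p_0}Y}$. Then $\phi:=\psi\circ L\colon\mathrm{supp}(\nu)\to\mathcal{H}$ obeys $\|\phi(p)\|=d_Y(p_0,p)=d_Y(\mathrm{bar}(\nu),p)$ by (a) and $\|\phi(p)-\phi(q)\|\le d_Y(p,q)$ by (b), so $\phi$ is an admissible competitor for $\delta(\nu)$, satisfying \eqref{umbrellaborn} and \eqref{1-lipschitz}. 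Because $\lambda=L_*\nu$, the ratio $\|\int_Y\phi(p)\,\nu(dp)\|^2/\int_Y\|\phi(p)\|^2\,\nu(dp)$ equals the corresponding ratio for $\psi$ against $\lambda$; taking the infimum over $\psi$ then gives $\delta(\nu)\le\delta(\lambda)\le\delta(TC_{p_0}Y,O_{TC_{p_0}Y})$. I expect the main obstacle to be the barycenter-preservation step, where one has to convert the global minimality of $p_0$ in $Y$ into the cone-point characterization on $TC_{p_0}Y$; this needs the first variation formula plus a density argument to reach the directions of $S_{p_0}Y$ that are not realized by genuine geodesics.
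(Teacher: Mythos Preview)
The paper does not prove this lemma; it is quoted from \cite{IN} without argument, so there is no in-paper proof to compare against. Your argument is essentially the standard one and is correct apart from one slip.

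The claim that $L$ is injective is false in general. In a metric tree (which is $\mathrm{CAT}(0)$), two geodesics issuing from $p_0$ can share an initial segment and then branch: they have Alexandrov angle $0$ at $p_0$ yet end at distinct points the same distance from $p_0$, and $L$ identifies those endpoints. For instance, let $Y$ be a tree in which $p_0$ is a degree-$2$ vertex with one edge leading to a trivalent vertex $O$; the geodesics from $p_0$ through $O$ to the two far leaves have the same direction at $p_0$ but different endpoints. Your parenthetical ``whence the geodesics coincide'' is precisely where this fails: vanishing Alexandrov angle only forces the geodesics to agree on an initial segment, not globally.

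This does not actually damage the proof. The conclusion $\lambda\in\mathcal{P}'(TC_{p_0}Y)$ follows once you have established $\mathrm{bar}(\lambda)=O_{TC_{p_0}Y}$: if $\lambda$ were a single Dirac mass at some $v$, its barycenter would be $v$, forcing $v=O_{TC_{p_0}Y}$; but $L(p)=O_{TC_{p_0}Y}$ only for $p=p_0$, and $\nu\in\mathcal{P}'(Y)$ has at least one support point other than $p_0$. So simply drop the injectivity claim and defer the check that $\lambda\in\mathcal{P}'$ until after the barycenter computation. The remaining steps---the first variation formula yielding condition (ii) of Lemma~\ref{barycenter}, the density argument extending it to all of $S_{p_0}Y$, and the transfer of competitors $\psi\mapsto\psi\circ L$---are all sound.
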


The following lemma is a slight generalization of Proposition 6.5 in \cite{IN}. 

\begin{lemma}\label{product}
Let $(T_1 ,d_1 ) ,(T_2 ,d_2 ) ,(T_3 ,d_3 ) ,\ldots $ be complete $\mathrm{CAT}(0)$ spaces which are 
isometric to Euclidean cones,  
and let $O_1 ,O_2 ,\ldots$ be their cone points respectively. 
Let $T$ be the cone obtained as the product of $T_1 , T_2 ,\ldots$ with the cone point 
$O=(O_1 ,O_2 ,\ldots )$. 
Then we have
$$
\delta (T, O) =\sup_n \delta (T_n,\hspace{0.8mm} O_{n}) .
$$
\end{lemma}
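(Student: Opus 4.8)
The plan is to establish the two inequalities $\delta(T,O)\ge\sup_n\delta(T_n,O_n)$ and $\delta(T,O)\le\sup_n\delta(T_n,O_n)$ separately. The tools I would use are the canonical isometric inclusions $j_n:T_n\to T$, sending $x$ to the sequence with $n$-th entry $x$ and $m$-th entry $O_m$ for every $m\ne n$, and the coordinate projections $\pi_n:T\to T_n$, which are $1$-Lipschitz. A preliminary observation, used repeatedly, is that the barycenter-minimizing property of a finitely supported measure on $T$ can be tested against points that differ from a given point only in the $n$-th coordinate; using the product metric formula this shows, on the one hand, that $\mathrm{bar}\big((j_n)_*\mu_n\big)=O$ whenever $\mathrm{bar}(\mu_n)=O_n$, and on the other hand that $\mathrm{bar}\big((\pi_n)_*\nu\big)=O_n$ whenever $\mathrm{bar}(\nu)=O$.

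For the lower bound I would fix $\mu_n\in\mathcal{P}'(T_n)$ with $\mathrm{bar}(\mu_n)=O_n$ and set $\mu:=(j_n)_*\mu_n\in\mathcal{P}'(T)$, so $\mathrm{bar}(\mu)=O$ by the observation above. Since $j_n$ is an isometric embedding carrying $O_n=\mathrm{bar}(\mu_n)$ to $O=\mathrm{bar}(\mu)$, precomposing with $j_n$ turns any map realizing a value in the definition of $\delta(\mu)$ into a map realizing the \emph{same} value in the definition of $\delta(\mu_n)$, because $(j_n)_*\mu_n=\mu$; hence $\delta(\mu_n)\le\delta(\mu)\le\delta(T,O)$, and taking the supremum over $\mu_n$ and then over $n$ gives the inequality.

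For the upper bound I would take an arbitrary $\nu\in\mathcal{P}'(T)$ with $\mathrm{bar}(\nu)=O$, write $\mathrm{supp}(\nu)=\{p^{(1)},\ldots,p^{(N)}\}$ with $p^{(k)}=(p^{(k)}_1,p^{(k)}_2,\ldots)$, and set $\nu_n:=(\pi_n)_*\nu$, so $\mathrm{bar}(\nu_n)=O_n$ for every $n$. I would call $n$ \emph{essential} if $\nu_n\in\mathcal{P}'(T_n)$, i.e.\ if some $p^{(k)}_n\ne O_n$; for essential $n$ one then has $\delta(\nu_n)\le\delta(T_n,O_n)$ and $b_n:=\int d_n(O_n,z)^2\,\nu_n(dz)>0$, and since $\nu\in\mathcal{P}'(T)$ at least one coordinate is essential, so in particular $\sup_n\delta(T_n,O_n)\ge0$. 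Given $\varepsilon>0$, for each essential $n$ I would pick a map $\phi_n:\mathrm{supp}(\nu_n)\to\mathcal{H}_n$ satisfying \eqref{umbrellaborn} and \eqref{1-lipschitz} for $\nu_n$ whose quotient in the definition of $\delta(\nu_n)$ is at most $\delta(\nu_n)+\varepsilon$, set $\phi_n\equiv0$ for inessential $n$, and assemble them into $\Phi:=(\phi_n\circ\pi_n)_n$ with values in the Hilbert space $\ell^2$-direct sum $\bigoplus_n\mathcal{H}_n$. One checks that $\Phi$ is admissible for $\nu$: condition \eqref{umbrellaborn} holds because $\|\phi_n(p^{(k)}_n)\|^2=d_n(O_n,p^{(k)}_n)^2$ sums over $n$ to $d(O,p^{(k)})^2$ — exactly the $\ell^2$-product condition, which also guarantees that $\Phi$ really takes values in $\bigoplus_n\mathcal{H}_n$ — and \eqref{1-lipschitz} holds coordinatewise. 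The numerator and denominator of the quotient for $\Phi$ then split as $\sum_n a_n$ and $\sum_n b_n$ with $a_n:=\|\int\phi_n\,d\nu_n\|^2$; both series converge since $0\le a_n\le b_n$ and $\sum_n b_n=\int d(O,w)^2\,\nu(dw)<\infty$, the inessential terms vanish, and for essential $n$ we have $a_n\le(\delta(T_n,O_n)+\varepsilon)\,b_n$. The elementary inequality $\sum_n a_n/\sum_n b_n\le\sup_n(a_n/b_n)$ therefore yields $\delta(\nu)\le\sup_n\delta(T_n,O_n)+\varepsilon$; letting $\varepsilon\to0$ and taking the supremum over $\nu$ completes the proof. (If $\mathcal{P}'(T)$ contains no measure with barycenter $O$, then $\delta(T,O)=-\infty$, and the lower-bound construction shows that no $\mathcal{P}'(T_n)$ contains a measure with barycenter $O_n$ either, so both sides are $-\infty$.)

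As for difficulty, this is more bookkeeping than obstacle: the one substantive geometric input is the barycenter-projection identity $\mathrm{bar}\big((\pi_n)_*\nu\big)=O_n$. The two points needing genuine care are the passage to infinite products — one must use that the product is an $\ell^2$-product (so $\sum_n d_n(O_n,w_n)^2<\infty$ for $w\in T$) to know that $\Phi$ lands in $\bigoplus_n\mathcal{H}_n$ and that $\sum_n a_n,\sum_n b_n$ converge — and the separate treatment of the inessential coordinates, where $\nu_n\notin\mathcal{P}'(T_n)$ and a naive appeal to $\delta(T_n,O_n)$ would be illegitimate.
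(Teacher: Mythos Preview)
Your proof is correct and follows essentially the same argument as the paper's: isometric inclusions $j_n$ for the lower bound, and assembling near-optimal coordinate maps $\phi_n$ into a product map $\Phi$ on $\bigoplus_n\mathcal{H}_n$ for the upper bound, then using $\sum a_n/\sum b_n\le\sup_n(a_n/b_n)$. You are in fact more careful than the paper about the inessential coordinates (where $\nu_n\notin\mathcal{P}'(T_n)$), the convergence issues arising from the infinite $\ell^2$-product, and the vacuous $-\infty$ case.
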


\begin{proof}
The following proof is almost the same argument as in the proof of Proposition 6.5 in \cite{IN}. 
We however include it  for the sake of completeness. 

First, the inequality $\delta (T, O) \geq\sup_n \delta (T_n,\hspace{0.5mm} O_{n}) $ is obvious. 
Because 
we have the canonical isometric embedding $\mathcal{I}_n : T_n \to T$ for each $n$,  
and for each $\mu\in\mathcal{P}'(T_n )$ with $\mathrm{bar}(\mu )=O_n$, 
it is easy to see that $\mathrm{bar}(\mathcal{I}_{n*} \mu )=O$ and 
$\delta (\mu )=\delta (\mathcal{I}_{n*} \mu )$. 

Let 
\begin{equation*}
\mu =\sum_{i=1}^m t_i \mathrm{Dirac}_{v_i}
\in\mathcal{P}'(T)
\end{equation*}
be an arbitrary measure in $\mathcal{P}'(T)$ with $\mathrm{bar}(\mu )=O$, 
where 
$v_1 ,\ldots ,v_m \in T$ and 
$t_1 ,\ldots ,t_m >0$ with $\sum_{i=1}^m t_i =1$. 
Write $v_i=(v_i^{(1)},v_i^{(2)},\ldots )$ and let 
$$
\mu_n
=
\sum_{i=1}^m t_i \mathrm{Dirac}_{v_i^{(n)}}
\in\mathcal{P}'(T_n), \quad n=1,2,\ldots .
$$
Then $\mathrm{bar}(\mu_n)=O_n$ for each $n$. 
Because if we have $\mathrm{bar}(\mu_n )\neq O_{n}$ for some $n$, 
it is easy to show that
$$
\int_{T}d(w, B)^2 \mu (dw)
<
\int_{T}d(w, O)^2 \mu (dw) ,
$$
where $B\in T$ is a point in $T$ such that all of its components 
are the cone points but $\mathrm{bar}(\mu_n )$ for the $n$-th component, 
and it contradicts the assumption that $\mathrm{bar}(\mu )=O$. 

Let $\varepsilon >0$ be an arbitrary positive number. 
By the definition of $\delta (T_n ,O_n)$, there exists 
a map $\phi_n :\mathrm{supp}(\mu_n )\to\mathcal{H}_n$ from the support of 
$\mu_n$ to some Hilbert space $\mathcal{H}_n$ with 
the properties \eqref{umbrellaborn} and \eqref{1-lipschitz} with respect to $\mu_n$, satisfying 
$$
\frac{\|\int_{T_n} \phi_n (v) \mu_n (dv)\|^2}{\int_{T_n} \|\phi_n (v)\|^2 \mu_n (dv)}
\leq
\delta (T_n ,O_n) +\varepsilon .
$$
We define a map $\phi :\mathrm{supp}(\mu )\to\mathcal{H}$ from 
the support of $\mu$ to the Hilbert space 
$\mathcal{H}=\mathcal{H}_1 \oplus\mathcal{H}_2 \oplus\cdots$ to be 
$$
\phi (v_i )=\left(\phi_1 (v_i^{(1)}),\phi_2 (v_i^{(2)}),\ldots \right) ,
\quad i=1,\ldots ,m .
$$
Then it is straightforward to see that $\phi$ satisfies the properties 
\eqref{umbrellaborn} and \eqref{1-lipschitz} with respect to $\mu$. 
And we have 
\begin{multline*}
\delta(\mu )\leq
\frac{\|\int_T \phi (v)\mu (dv)\|^2}{\int_T \|\phi (v )\|^2 \mu (dv)}
=
\frac{\sum_{n=1}^{\infty}\|\sum_{i=1}^m t_i \phi_n (v_i^{(n)} )\|^2}{
\sum_{n=1}^{\infty}\sum_{i=1}^m t_i \|\phi_n (v_i^{(n)} )\|^2}
\\
\leq
\sup_n \frac{\|\sum_{i=1}^m t_i \phi_n (v_i^{(n)} )\|^2}{\sum_{i=1}^m t_i \|\phi_n (v_i^{(n)} )\|^2}
\leq
\sup_n \left(\delta (T_n ,O_n) +\varepsilon \right).
\end{multline*}
Since this holds for an arbitrary $\varepsilon >0$ and 
an arbitrary $\mu\in\mathcal{P}'(T)$ with $\mathrm{bar}(\mu )=O$, 
we have 
$\delta (T, O) \leq\sup_n \delta (T_n,\hspace{0.8mm} O_{n})$. 
\end{proof}

For a $\mathrm{CAT}(1)$ space $X$, 
we prove the following relation between $\delta (\mathrm{Cone}(X) ,O_{\mathrm{Cone}(X)})$ 
and $\tilde{\delta}(X)$ 

\begin{proposition}\label{deltatildedelta}
Let $(X,d_X )$ be a complete $\mathrm{CAT}(1)$ space. 
Then we have 
$$
\delta (\mathrm{Cone}(X),\hspace{0.3mm} O_{\mathrm{Cone}(X)})\leq\tilde{\delta}(X).
$$
\end{proposition}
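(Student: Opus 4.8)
The plan is to unwind both definitions and exhibit, for every $\nu\in\mathcal{P}'(\mathrm{Cone}(X))$ with $\mathrm{bar}(\nu)=O_{\mathrm{Cone}(X)}$, a measure $\mu\in\mathcal{P}(X)$ with $\mathrm{bar}(\iota_*\mu)=O_{\mathrm{Cone}(X)}$ and a correspondence between admissible maps so that $\delta(\nu)\le\tilde\delta(\mu)\le\tilde\delta(X)$. Write $\nu=\sum_{i=1}^m t_i\,\mathrm{Dirac}_{v_i}$ with each $v_i\in\mathrm{Cone}(X)$ represented by $(x_i,r_i)\in X\times[0,\infty)$. Since $\mathrm{bar}(\nu)=O_{\mathrm{Cone}(X)}$, no $v_i$ can itself be the cone point except in a degenerate way; more precisely I would first reduce to the case where every $r_i>0$ (a Dirac mass at the cone point contributes nothing to the numerator of $\delta(\nu)$, since $\phi$ must send it to $0$ by \eqref{umbrellaborn}, and can only help, so discarding it does not decrease $\delta(\nu)$). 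Then set $\mu=\sum_{i=1}^m \bigl(t_i r_i^2/\sum_j t_j r_j^2\bigr)\,\mathrm{Dirac}_{x_i}$, the measure obtained by pushing $\nu$ to $X$ and reweighting by the squared cone-radii. The key computation is that the condition $\mathrm{bar}(\nu)=O_{\mathrm{Cone}(X)}$, read through Lemma \ref{barycenter}(ii), says exactly $\sum_i t_i r_i\cos(\min\{\pi,d_X(x,x_i)\})\le 0$ for all $x\in X$; I would need to check this is equivalent (after the $r_i^2$-reweighting, using that $\cos\le r_i\cos$ is not literally true — so care is needed here) to $\mathrm{bar}(\iota_*\mu)=O_{\mathrm{Cone}(X)}$, i.e. $\sum_i t_i r_i^2\cos(\min\{\pi,d_X(x,x_i)\})\le 0$. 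This implication between the two inequalities is the first place to be careful.

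Next I would translate maps. Given an admissible $\phi:\mathrm{supp}(\nu)\to\mathcal{H}$ for $\delta(\nu)$, so $\|\phi(v_i)\|=d_{\mathrm{Cone}(X)}(O,v_i)=r_i$ and $\|\phi(v_i)-\phi(v_j)\|\le d_{\mathrm{Cone}(X)}(v_i,v_j)$, define $\psi:X\to\mathcal{H}$ (on $\{x_1,\dots,x_m\}$, extended arbitrarily, or better: work only with the finite set) by $\psi(x_i)=\phi(v_i)/r_i$, a unit vector. The constraint to verify is the angle condition \eqref{realization-tilde}: $\angle(\psi(x_i),\psi(x_j))\le d_X(x_i,x_j)$. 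This should follow from the $1$-Lipschitz bound on $\phi$ together with the cone distance formula $d_{\mathrm{Cone}(X)}(v_i,v_j)^2=r_i^2+r_j^2-2r_ir_j\cos(\min\{\pi,d_X(x_i,x_j)\})$: the law of cosines in $\mathcal{H}$ gives $\|\phi(v_i)-\phi(v_j)\|^2=r_i^2+r_j^2-2r_ir_j\cos\angle(\phi(v_i),\phi(v_j))$, and comparing the two shows $\cos\angle(\psi(x_i),\psi(x_j))\ge\cos(\min\{\pi,d_X(x_i,x_j)\})$, hence $\angle(\psi(x_i),\psi(x_j))\le\min\{\pi,d_X(x_i,x_j)\}\le d_X(x_i,x_j)$. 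So $\psi$ is admissible for $\tilde\delta(\mu)$.

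Then I would match the two quotients. By construction $\mathbb{E}_\mu[\psi]=\sum_i \frac{t_i r_i^2}{\sum_j t_j r_j^2}\cdot\frac{\phi(v_i)}{r_i}=\frac{1}{\sum_j t_j r_j^2}\sum_i t_i r_i\phi(v_i)$, hence $\|\mathbb{E}_\mu[\psi]\|^2=\frac{\|\sum_i t_i r_i\phi(v_i)\|^2}{(\sum_j t_j r_j^2)^2}$. Here there is a mismatch: the numerator of $\delta(\nu)$ is $\|\sum_i t_i\phi(v_i)\|^2$, not $\|\sum_i t_i r_i\phi(v_i)\|^2$ — the cone radii $r_i$ appear in one but not the other. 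This is the genuine obstacle, and I expect it is resolved by the freedom in choosing $\phi$: one should first \emph{normalize} the problem by noting that $\delta(\nu)$ is unchanged under scaling $\nu\mapsto$ (push $v_i$ to $(x_i,\lambda r_i)$) since $\delta$ is scale-invariant, so WLOG one may take $\nu$ supported on $X\times\{1\}$, i.e. $\nu=\iota_*\mu$ outright with all $r_i=1$. Wait — but $\delta(\nu)$ for a \emph{fixed} $\nu$ is what must be bounded, and a general barycenter-$O$ measure need not be supported on the unit sphere. The correct fix, which I would implement, is: $\delta(\mathrm{Cone}(X),O)$ is a supremum over all such $\nu$, and I claim the sup is attained (or approached) on measures supported in $X\times\{1\}$, because replacing each $v_i=(x_i,r_i)$ by $(x_i,1)$ and adjusting weights $t_i\mapsto t_ir_i^2/\sum_jt_jr_j^2$ preserves the barycenter-at-$O$ condition (by the equivalence above, now with the $r_i^2$'s absorbed symmetrically) and does not decrease $\delta$; the admissible-map correspondence $\phi\leftrightarrow\psi$ above then becomes an exact bijection preserving the quotient, because with all $r_i=1$ one has $\psi(x_i)=\phi(v_i)$, $\mathbb{E}_\mu[\psi]=\sum_i(t_i/\sum_jt_j)\phi(v_i)$ matching the $\delta(\nu)$-quotient's numerator once the $\int\|\phi\|^2\nu$ denominator equals $\sum_i t_i r_i^2=\sum_i t_i$. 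Thus $\delta(\iota_*\mu)\le\tilde\delta(\mu)\le\tilde\delta(X)$ for every such $\mu$, and taking the supremum over $\mu$ (equivalently over all barycenter-$O$ measures after the reduction) yields $\delta(\mathrm{Cone}(X),O_{\mathrm{Cone}(X)})\le\tilde\delta(X)$. The main thing to get right is this reduction to the unit sphere and verifying that the reweighting genuinely preserves both the barycenter condition and admissibility — the Lipschitz-to-angle conversion via the law of cosines is routine once that is in place.
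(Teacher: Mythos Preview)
Your overall strategy---reduce to measures supported on $\iota(X)$ and then identify $\delta$ with $\tilde\delta$ there---is exactly the paper's strategy, and your law-of-cosines translation between the $1$-Lipschitz condition \eqref{1-lipschitz} and the angle condition \eqref{realization-tilde} is correct. The gap is in the reduction step: the $r_i^2$-reweighting you propose fails on both of the points you flagged as needing care. First, by Lemma~\ref{barycenter} the condition $\mathrm{bar}(\nu)=O_{\mathrm{Cone}(X)}$ reads $\sum_i t_i r_i\cos(\min\{\pi,d_X(x,x_i)\})\le 0$ for all $x$, whereas for $\iota_*\mu$ with weights $s_i\propto t_i r_i^2$ it reads $\sum_i t_i r_i^2\cos(\min\{\pi,d_X(x,x_i)\})\le 0$; there is no implication from the former to the latter (multiply the $i$-th summand by $r_i$ and the sign can flip). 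Second, even ignoring the barycenter issue, the claim that $\delta$ does not decrease under this reweighting is false: writing $\phi(v_i)=r_i u_i$ with $\|u_i\|=1$, the quotients for $\nu$ and for your $\nu'$ (under the natural bijection $\phi'(x_i,1)=u_i$) are $\frac{1}{R}\|\sum_i t_i r_i u_i\|^2$ and $\frac{1}{R^2}\|\sum_i t_i r_i^2 u_i\|^2$ respectively, and one can easily arrange $\sum_i t_i r_i^2 u_i=0$ while $\sum_i t_i r_i u_i\ne 0$ (e.g.\ $m=2$, $u_2=-u_1$, $t_1 r_1^2=t_2 r_2^2$ but $r_1\ne r_2$), so $\delta(\nu')<\delta(\nu)$ is possible.

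The easy repair is to reweight by $r_i$ rather than $r_i^2$: set $s_i=t_i r_i/Q$ with $Q=\sum_j t_j r_j$ and $\nu'=\sum_i s_i\,\mathrm{Dirac}_{[x_i,1]}$. Then the barycenter condition for $\nu'$ is \emph{literally} the barycenter condition for $\nu$ (both say $\sum_i t_i r_i\cos(\cdots)\le 0$), and under the bijection $\phi\leftrightarrow\phi'$ the two quotients differ by the constant factor $R/Q^2=\bigl(\sum_i t_i r_i^2\bigr)\big/\bigl(\sum_i t_i r_i\bigr)^2\ge 1$ (Cauchy--Schwarz with $\sum t_i=1$), so $\delta(\nu')\ge\delta(\nu)$ in one stroke. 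The paper arrives at the same unit-sphere reduction but by an iterative argument (Lemmas~\ref{noconepoint} and \ref{alpha}): it repeatedly scales the smallest radii up to the next value, adjusting weights by $t_i\mapsto t_i/\alpha$, and checks a monotonicity inequality at each step. The net effect of that iteration is precisely the $r_i$-reweighting above, so the one-line Cauchy--Schwarz argument is a cleaner substitute for Lemma~\ref{alpha}.
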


Before proving Proposition \ref{deltatildedelta}, we establish the following two lemmas. 

\begin{lemma}\label{noconepoint}
Let $(X,d_X )$ be a complete $\mathrm{CAT}(1)$ space. 
Let 
\begin{equation*}
\nu =\sum_{i=1}^m t_i \mathrm{Dirac}_{v_i}
\in\mathcal{P}'(\mathrm{Cone}(X)), 
\end{equation*}
where 
$v_i \in\mathrm{Cone}(X)$ for $i=1,\ldots ,m$ and 
$t_1 ,\cdots ,t_m >0$ with $\sum_{i=1}^m t_i =1$. 
Suppose that $\mathrm{bar}(\nu )=O_{\mathrm{Cone}(X)}$. 
If $v_1 =O_{\mathrm{Cone}(X)}$ and if 
$$
\nu' =\sum_{i=2}^m \frac{t_i}{1-t_1} \mathrm{Dirac}_{v_i} , 
$$
then 
$\mathrm{bar}(\nu' )=O_{\mathrm{Cone}(X)}$ and 
$\delta (\nu )\leq\delta (\nu' )$. 
\end{lemma}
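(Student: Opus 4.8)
The plan is to treat the two assertions in turn, establishing $\mathrm{bar}(\nu')=O_{\mathrm{Cone}(X)}$ first, since this is needed even to make sense of $\delta(\nu')$ and will be used in the second part.

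For the barycenter claim I would invoke Lemma~\ref{barycenter}. Since $v_1=O_{\mathrm{Cone}(X)}$ is represented by a pair of the form $(x_1,0)$, we have $\langle E_x,v_1\rangle=0$ for every $x\in X$. Consequently, for each $x\in X$,
$$\int_{\mathrm{Cone}(X)}\langle E_x,v\rangle\,\nu'(dv)=\frac{1}{1-t_1}\sum_{i=2}^{m}t_i\langle E_x,v_i\rangle=\frac{1}{1-t_1}\int_{\mathrm{Cone}(X)}\langle E_x,v\rangle\,\nu(dv)\leq 0,$$
the last inequality being condition \textbf{(ii)} of Lemma~\ref{barycenter} for $\nu$, which holds because $\mathrm{bar}(\nu)=O_{\mathrm{Cone}(X)}$. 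Applying the ``\textbf{(ii)}$\,\Rightarrow\,$\textbf{(i)}'' direction of Lemma~\ref{barycenter} to $\nu'$ then gives $\mathrm{bar}(\nu')=O_{\mathrm{Cone}(X)}$. I would also note in passing that $\nu'\in\mathcal{P}'(\mathrm{Cone}(X))$: if $m=2$ then $\nu=t_1\mathrm{Dirac}_{O_{\mathrm{Cone}(X)}}+t_2\mathrm{Dirac}_{v_2}$ with $v_2\neq O_{\mathrm{Cone}(X)}$ and $t_2>0$, and the barycenter of such a measure lies on the geodesic segment from $O_{\mathrm{Cone}(X)}$ to $v_2$ at positive distance from the cone point, contradicting the hypothesis; hence $m\geq 3$ and $\mathrm{supp}(\nu')$ has at least two points.

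For the inequality $\delta(\nu)\leq\delta(\nu')$, the key observation is that since $\mathrm{bar}(\nu)=O_{\mathrm{Cone}(X)}$, condition \eqref{umbrellaborn} forces any competitor map $\phi$ for $\delta(\nu)$ to vanish at $v_1$, so the cone-point atom contributes nothing. Concretely, I would start with an arbitrary map $\phi:\mathrm{supp}(\nu')\to\mathcal{H}$ satisfying \eqref{umbrellaborn} and \eqref{1-lipschitz} relative to $\nu'$ — so, using $\mathrm{bar}(\nu')=O_{\mathrm{Cone}(X)}$, one has $\|\phi(v_i)\|=d(O_{\mathrm{Cone}(X)},v_i)$ for $i\geq 2$ — and extend it to $\mathrm{supp}(\nu)$ by setting $\phi(v_1)=0\in\mathcal{H}$. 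This extension satisfies \eqref{umbrellaborn} and \eqref{1-lipschitz} relative to $\nu$: indeed $\|\phi(v_1)\|=0=d(\mathrm{bar}(\nu),v_1)$, and for $i\geq 2$ one has $\|\phi(v_1)-\phi(v_i)\|=\|\phi(v_i)\|=d(O_{\mathrm{Cone}(X)},v_i)=d(v_1,v_i)$, while the remaining conditions are inherited from $\phi$ on $\mathrm{supp}(\nu')$. Because the $v_1$-term drops out of both $\int\phi\,d\nu$ and $\int\|\phi\|^2\,d\nu$, one computes directly that $\int_{\mathrm{Cone}(X)}\phi(v)\,\nu(dv)=(1-t_1)\int_{\mathrm{Cone}(X)}\phi(v)\,\nu'(dv)$ and $\int_{\mathrm{Cone}(X)}\|\phi(v)\|^2\,\nu(dv)=(1-t_1)\int_{\mathrm{Cone}(X)}\|\phi(v)\|^2\,\nu'(dv)$, whence
$$\frac{\big\|\int_{\mathrm{Cone}(X)}\phi(v)\,\nu(dv)\big\|^2}{\int_{\mathrm{Cone}(X)}\|\phi(v)\|^2\,\nu(dv)}=(1-t_1)\,\frac{\big\|\int_{\mathrm{Cone}(X)}\phi(v)\,\nu'(dv)\big\|^2}{\int_{\mathrm{Cone}(X)}\|\phi(v)\|^2\,\nu'(dv)}\leq\frac{\big\|\int_{\mathrm{Cone}(X)}\phi(v)\,\nu'(dv)\big\|^2}{\int_{\mathrm{Cone}(X)}\|\phi(v)\|^2\,\nu'(dv)},$$
using $0<1-t_1<1$ and positivity of the denominators (which holds since $\mathrm{supp}(\nu')$ contains some $v_i\neq O_{\mathrm{Cone}(X)}$, so $\|\phi(v_i)\|>0$). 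Since the extensions just constructed form a subfamily of the maps admissible in the definition of $\delta(\nu)$, and since letting $\phi$ range over all maps admissible for $\nu'$ recovers the infimum defining $\delta(\nu')$, taking infima yields $\delta(\nu)\leq(1-t_1)\,\delta(\nu')\leq\delta(\nu')$.

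I do not expect a genuine obstacle: the whole argument rests on the single observation that the cone-point atom forces $\phi$ to vanish there and hence disappears from every relevant sum, after which only elementary bookkeeping remains. The two points deserving a little care are the verification that the extended map still obeys the Lipschitz bound \eqref{1-lipschitz} — this is precisely where the norm equality $\|\phi(v_i)\|=d(O_{\mathrm{Cone}(X)},v_i)$, i.e.\ the barycenter computation, gets used — and the minor check that $\nu'$ still lies in $\mathcal{P}'(\mathrm{Cone}(X))$.
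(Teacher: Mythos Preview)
Your proof is correct and follows essentially the same route as the paper's: invoke Lemma~\ref{barycenter} for the barycenter claim, then extend an arbitrary admissible $\phi'$ for $\nu'$ to $\nu$ by setting $\phi(v_1)=0$ and compare the two Rayleigh quotients. Your version is more detailed (you spell out the factor $1-t_1$, check $\nu'\in\mathcal{P}'(\mathrm{Cone}(X))$, and verify \eqref{1-lipschitz} for the extended map) and in fact yields the slightly sharper bound $\delta(\nu)\leq(1-t_1)\delta(\nu')$, but the argument is the same.
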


\begin{proof}
The former assertion follows immediately from Lemma \ref{barycenter}. 
Let $\phi' :\mathrm{supp}(\nu' )\to\mathcal{H}$ be a map from 
the support of $\nu'$ to some Hilbert space $\mathcal{H}$ satisfying 
\eqref{umbrellaborn} and \eqref{1-lipschitz} with respect to $\nu'$. 
Define $\phi :\mathrm{supp}(\nu )\to\mathcal{H}$ by 
\begin{align*}
\phi (v_1 )&=0, 
\\
\phi (v_i )&=\phi' (v_i ),\quad i=2,\cdots ,m. 
\end{align*}
Then $\phi$ 
satisfies \eqref{umbrellaborn} and \eqref{1-lipschitz} with respect to $\nu$. 
Moreover, an easy computation shows that 
$$
\frac{\|\int_{\mathrm{Cone}(X)}\phi (v)\nu (dv)\|^2}{
\int_{\mathrm{Cone}(X)}\|\phi (v)\|^2 \nu (dv)}
\leq
\frac{\|\int_{\mathrm{Cone}(X)}\phi' (v)\nu' (dv)\|^2}{
\int_{\mathrm{Cone}(X)}\|\phi' (v)\|^2 \nu' (dv)}. 
$$
Hence, by the definition of $\delta$, the latter assertion follows.
\end{proof}

\begin{lemma}\label{alpha}
Let $(X,d_X )$ be a complete $\mathrm{CAT}(1)$ space 
and let 
$$
\nu =\sum_{i=1}^m t_i \mathrm{Dirac}_{\lbrack x_i ,r_i \rbrack}\in\mathcal{P}'(\mathrm{Cone}(X)),
$$
where $\lbrack x_i ,r_i \rbrack$ is the point on 
$\mathrm{Cone}(X)$ 
represented by $(x_i ,r_i )\in X\times\lbrack 0,\infty )$. 
Suppose that $\alpha >0$, $l\in\{1,2,\cdots ,m-1\}$, and  
$$
\nu'
=
\frac{1}{\sum_{i=1}^l \frac{t_i}{\alpha}+\sum_{i=l+1}^m t_i}
\left(
\sum_{i=1}^l \frac{t_i}{\alpha}\mathrm{Dirac}_{\lbrack x_i ,\alpha r_i \rbrack}
+\sum_{i=l+1}^m t_i \mathrm{Dirac}_{\lbrack x_i ,r_i \rbrack}
\right) .
$$
Then $\mathrm{bar}(\nu' )=O_{\mathrm{Cone}(X)}$ 
if and only if 
$\mathrm{bar}(\nu )=O_{\mathrm{Cone}(X)}$. 
Moreover, if $\mathrm{bar}(\nu )=\mathrm{bar}(\nu' )=O_{\mathrm{Cone}(X)}$ 
and if $\alpha >1$ (resp. $0<\alpha<1$), then 
the inequality 
$\delta (\nu )\leq\delta (\nu' )$ holds 
if and only if 
\begin{equation}
\alpha
\frac{\sum^{l}_{i=1}t_i r_i^2}{\sum_{i=l+1}^m t_i r_i^2}
\leq
\frac{\sum_{i=1}^l t_i}{\sum_{i=l+1}^m t_i}
\quad\left(\textrm{resp.}\hspace{1.6mm}
\alpha
\frac{\sum^{l}_{i=1}t_i r_i^2}{\sum_{i=l+1}^m t_i r_i^2}
\geq
\frac{\sum_{i=1}^l t_i}{\sum_{i=l+1}^m t_i}
\right).
\end{equation}
\end{lemma}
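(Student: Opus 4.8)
The plan is to show that replacing $\nu$ by $\nu'$ simply multiplies the Rayleigh quotient defining $\delta$ by a fixed positive factor, so that the inequality $\delta(\nu)\le\delta(\nu')$ reduces to comparing that factor with $1$, which then unwinds to the displayed condition by elementary algebra. Throughout write $a=\sum_{i=1}^{l}t_i$, $b=\sum_{i=l+1}^{m}t_i$, $A=\sum_{i=1}^{l}t_ir_i^2$, $B=\sum_{i=l+1}^{m}t_ir_i^2$ and $Z=\sum_{i=1}^{l}\frac{t_i}{\alpha}+\sum_{i=l+1}^{m}t_i=\frac a\alpha+b$, so that $1/Z$ is the normalizing constant in the definition of $\nu'$.

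For the first assertion I would invoke Lemma \ref{barycenter}. If $E_x$ denotes the point of $\mathrm{Cone}(X)$ represented by $(x,1)$, then from the definition of $\langle\cdot,\cdot\rangle$ we get $\langle E_x,[x_i,\alpha r_i]\rangle=\alpha\langle E_x,[x_i,r_i]\rangle$, so the weight $t_i/\alpha$ cancels the factor $\alpha$, and for every $x\in X$
$$\int_{\mathrm{Cone}(X)}\langle E_x,v\rangle\,\nu'(dv)=\frac1Z\int_{\mathrm{Cone}(X)}\langle E_x,v\rangle\,\nu(dv).$$
Since $Z>0$, condition \textbf{(ii)} of Lemma \ref{barycenter} holds for $\nu'$ exactly when it holds for $\nu$, and therefore $\mathrm{bar}(\nu')=O_{\mathrm{Cone}(X)}$ if and only if $\mathrm{bar}(\nu)=O_{\mathrm{Cone}(X)}$.

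Now suppose both barycenters equal $O_{\mathrm{Cone}(X)}$. Because $d_{\mathrm{Cone}(X)}(O_{\mathrm{Cone}(X)},[x_i,r_i])=r_i$, an admissible competitor $\phi$ for $\delta(\nu)$ amounts to a tuple $(w_1,\dots,w_m)$ of vectors in a Hilbert space with $\|w_i\|=r_i$ for all $i$ and, upon squaring the Lipschitz condition \eqref{1-lipschitz} and using the cosine formula for the cone metric, $\langle w_i,w_j\rangle\ge r_ir_j\cos(\min\{\pi,d_X(x_i,x_j)\})$ for all $i,j$. The analogous description holds for $\delta(\nu')$ with the radii of the first $l$ points changed to $\alpha r_i$, and one checks at once that the substitution $w_i\mapsto w_i'$ with $w_i'=\alpha w_i$ for $i\le l$ and $w_i'=w_i$ for $i>l$ takes the constraints for $\nu$ precisely onto those for $\nu'$ --- in each inequality the superfluous powers of $\alpha$ cancel. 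Hence this substitution is a bijection between the admissible competitors for $\nu$ and those for $\nu'$, and a short computation gives $\int\phi'\,d\nu'=\frac1Z\int\phi\,d\nu$, $\int\|\phi'\|^2\,d\nu'=\frac1Z(\alpha A+B)$ and $\int\|\phi\|^2\,d\nu=A+B$. Consequently the Rayleigh quotient for $\nu'$ equals $\frac{A+B}{Z(\alpha A+B)}$ times that for $\nu$, and passing to the infimum yields
$$\delta(\nu')=\frac{A+B}{Z(\alpha A+B)}\,\delta(\nu).$$

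To finish, assume $\delta(\nu)\ne 0$ (if $\delta(\nu)=0$ then $\delta(\nu')=0$ as well and the inequality holds trivially). Then $\delta(\nu)\le\delta(\nu')$ is equivalent to $Z(\alpha A+B)\le A+B$, and expanding $Z$ and using $a+b=1$ one obtains the identity
$$Z(\alpha A+B)-(A+B)=\frac{\alpha-1}{\alpha}\,(\alpha bA-aB).$$
Hence for $\alpha>1$ the inequality $\delta(\nu)\le\delta(\nu')$ is equivalent to $\alpha bA\le aB$, and for $0<\alpha<1$ to $\alpha bA\ge aB$; dividing by $bB>0$ these are exactly the displayed inequalities. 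The one step that warrants care --- and which I would write out in detail --- is the check that $w_i\mapsto w_i'$ sends admissible competitors to admissible competitors, since the indices $\le l$ (whose radius and vector are both scaled by $\alpha$) and those $>l$ (left alone) are treated asymmetrically; it works because $d_{\mathrm{Cone}(X)}([x,t],[y,s])^2=t^2+s^2-2ts\cos(\min\{\pi,d_X(x,y)\})$ responds to $t\mapsto\alpha t$ in the same way that $\|w_i-w_j\|^2=\|w_i\|^2+\|w_j\|^2-2\langle w_i,w_j\rangle$ responds to $w_i\mapsto\alpha w_i$. I expect no real obstacle beyond this bookkeeping (and the trivial degenerate case already noted).
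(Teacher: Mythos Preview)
Your argument is correct and follows essentially the same route as the paper: both proofs use Lemma~\ref{barycenter} for the barycenter equivalence, set up the same bijection $\phi\leftrightarrow\phi'$ between admissible competitors by scaling the first $l$ vectors by $\alpha$, and reduce the comparison to an elementary algebraic identity. Your presentation is in fact a bit cleaner than the paper's, since you observe directly that the two Rayleigh quotients differ by the fixed multiplicative constant $\frac{A+B}{Z(\alpha A+B)}$ (so that $\delta(\nu')=\frac{A+B}{Z(\alpha A+B)}\delta(\nu)$), whereas the paper instead computes the \emph{difference} of the two quotients and factors it; but the algebraic content is the same, and your identity $Z(\alpha A+B)-(A+B)=\frac{\alpha-1}{\alpha}(\alpha bA-aB)$ is equivalent to the paper's equation~\eqref{two}.
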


\begin{proof}
The equivalence between 
$\mathrm{bar}(\nu )=O_{\mathrm{Cone}(X)}$ 
and 
$\mathrm{bar}(\nu' )=O_{\mathrm{Cone}(X)}$ 
is an immediate consequence of Lemma \ref{barycenter}. 
Assume that 
$\mathrm{bar}(\nu )=\mathrm{bar}(\nu' )=O_{\mathrm{Cone}(X)}$, 
and 
fix some real Hilbert space $\mathcal{H}$ of dimension $\geq m$.  
Then 
there is a natural bijection $\phi\mapsto\phi'$ between 
the set of all maps from $\mathrm{supp}(\nu )$ to $\mathcal{H}$ 
satisfying \eqref{umbrellaborn} and \eqref{1-lipschitz} with respect to $\nu$, and 
the set of all maps from $\mathrm{supp}(\nu' )$ to $\mathcal{H}$ 
satisfying \eqref{umbrellaborn} and \eqref{1-lipschitz} with respect to $\nu'$: 
it is given by 
\begin{align*}
\phi' \lbrack x_i ,\alpha r_i \rbrack &=\alpha\phi\lbrack x_i ,r_i \rbrack,\quad i=1,\cdots ,l,
\\
\phi' \lbrack x_i ,r_i \rbrack &=\phi \lbrack x_i ,r_i \rbrack ,\quad i=l+1,\cdots ,m .
\end{align*}
Let $\phi :\mathrm{supp}(\nu )\to\mathcal{H}$ and 
$\phi' :\mathrm{supp}(\nu' )\to\mathcal{H}$ be the maps satisfying 
\eqref{umbrellaborn} and \eqref{1-lipschitz} with respect to $\nu$ and $\nu'$ respectively, 
and corresponding to each other under this bijection. 
Let 
$$
T=\frac{1}{\frac{1}{\alpha}\sum_{i=1}^l t_i+\sum_{i=l+1}^m t_i} .
$$
Then we have 
$$
\frac{\|\int_{\mathrm{Cone}(X)}\phi' (p)\nu' (dp)\|^2}{\int_{\mathrm{Cone}(X)}\|\phi' (p)\|^2 \nu' (dp)}
=
T
\frac{\|\sum_{i=1}^m t_i \phi\lbrack x_i ,r_i \rbrack\|^2}{\alpha
\sum_{i=1}^l t_i \|\phi\lbrack x_i ,r_i\rbrack\|^2
+\sum_{i=l+1}^m t_i \|\phi\lbrack x_i ,r_i \rbrack\|^2} .
$$
Hence, 
\begin{multline}\label{one}
\frac{\|\int_{\mathrm{Cone}(X)}\phi' (p)\nu' (dp)\|^2}{\int_{\mathrm{Cone}(X)}\|\phi' (p)\|^2 \nu' (dp)}
-
\frac{\|\int_{\mathrm{Cone}(X)}\phi (p)\nu (dp)\|^2}{\int_{\mathrm{Cone}(X)}\|\phi (p)\|^2 \nu (dp)}
\\
=
\left\|\sum_{i=1}^m t_i \phi\lbrack x_i ,r_i \rbrack\right\|^2 
\frac{T\sum^m_{i=1}t_i r_i^2 -\alpha\sum_{i=1}^l t_i\|\phi\lbrack x_i ,r_i \rbrack\|^2 
-\sum_{i=l+1}^m t_i\|\phi\lbrack x_i ,r_i \rbrack\|^2}{\left( \alpha
\sum_{i=1}^l t_i\|\phi\lbrack x_i ,r_i \rbrack\|^2
+\sum_{i=l+1}^m t_i\|\phi\lbrack x_i ,r_i \rbrack\|^2 \right)
\left(
\sum_{i=1}^m t_i r_i^2
\right)} .
\end{multline}
We also have 
\begin{multline}\label{two}
T\sum^m_{i=1}t_i r_i^2 -\alpha\sum_{i=1}^l t_i\|\phi\lbrack x_i ,r_i \rbrack\|^2 
-\sum_{i=l+1}^m t_i\|\phi\lbrack x_i ,r_i \rbrack\|^2
\\
=
\frac{1-\alpha}{(1-\alpha )\left(\sum_{i=1}^l t_i \right)+\alpha}
\left\{
\alpha (\sum_{i=l+1}^m t_i )
(\sum_{i=1}^l t_i r_i^2 )-
(\sum_{i=1}^l t_i )
(\sum_{i=l+1}^m t_i r_i^2 )
\right\} .
\end{multline}
By \eqref{one} and \eqref{two}, 
the inequality 
$$
\frac{\|\int_{\mathrm{Cone}(X)}\phi' (p)\nu' (dp)\|^2}{\int_{\mathrm{Cone}(X)}\|\phi' (p)\|^2 \nu' (dp)}
\geq
\frac{\|\int_{\mathrm{Cone}(X)}\phi (p)\nu (dp)\|^2}{\int_{\mathrm{Cone}(X)}\|\phi (p)\|^2 \nu (dp)}
$$
holds 
if and only if 
$$
\alpha\geq 1,\quad 
\alpha (\sum_{i=l+1}^m t_i )
(\sum_{i=1}^l t_i r_i^2 )-
(\sum_{i=1}^l t_i )
(\sum_{i=l+1}^m t_i r_i^2 )
\leq
0
$$
or 
$$
0<\alpha\leq 1,\quad 
\alpha (\sum_{i=l+1}^m t_i )
(\sum_{i=1}^l t_i r_i^2 )-
(\sum_{i=1}^l t_i )
(\sum_{i=l+1}^m t_i r_i^2 )
\geq
0 .
$$
The lemma follows easily from this equivalence and 
the bijectivity of the correspondence $\phi\leftrightarrow\phi'$.  
\end{proof}

\begin{proof}[Proof of Proposition \ref{deltatildedelta}]
First suppose that 
$\mu\in\mathcal{P}(\mathrm{Cone}(X))$, 
$\mathrm{bar}(\mu ) =O_{\mathrm{Cone}(X)}$, 
and $\mathrm{supp}(\mu )\subset\iota (X)$. 
Let $\iota :X\to\mathrm{Cone}(X)$ be the canonical inclusion, and 
let $\iota^{-1} :\iota (X)\to X$ be the inverse map. 
Let $\tilde{\phi}:X\to\mathcal{H}$ be a map from $X$ to 
some Hilbert space $\mathcal{H}$ 
satisfying \eqref{realization-tilde}. 
Then the restriction $\phi =\lbrack\tilde{\phi}\circ\iota^{-1}\rbrack|_{\mathrm{supp}(\mu )}$ 
of $\tilde{\phi}\circ\iota^{-1}:\iota (X)\to\mathcal{H}$ 
to $\mathrm{supp}(\mu )$ 
satisfies \eqref{umbrellaborn} and 
\eqref{1-lipschitz}. 
Moreover we have 
$$
\|\mathbb{E}_{\iota^{-1}_* \mu}\lbrack\tilde{\phi}\rbrack\|^2
=
\frac{\|\int_{\mathrm{Cone}(X)}\phi (v)\mu (dv)\|^2}{\int_{\mathrm{Cone}(X)}\|\phi (v)\|^2 \mu (dv)} .
$$
Hence by the definitions of 
$\tilde{\delta}(\iota_*^{-1}\mu )$ and $\delta (\mu )$, we have 
$$
\delta (\mu )\leq\tilde{\delta}(\iota^{-1}_* \mu ).
$$

Thus, if we prove the existence of 
$\nu'\in\mathcal{P}(\mathrm{Cone}(X))$ such that 
\begin{equation}\label{nunu'}
\delta (\nu )\leq\delta (\nu' ),\quad
\mathrm{supp}(\nu' )\subset\iota (X) 
\end{equation}
for any 
$$
\nu
=\sum_{i=1}^m t_i \mathrm{Dirac}_{\lbrack x_i ,r_i \rbrack}\in\mathcal{P}'(\mathrm{Cone}(X))
$$ 
with 
$\mathrm{bar}(\nu )=O_{\mathrm{Cone}(X)}$, 
then the desired assertion follows. 
Here, we can assume $r_i >0$ for all $i\in\{ 1,\cdots ,m\}$ by Lemma \ref{noconepoint}.
And, if 
$r_1 =r_2 =\cdots =r_m$, we can take 
$$
\nu' =\sum_{i=1}^m t_i \mathrm{Dirac}_{\lbrack x_i ,
1 \rbrack} , 
$$
and 
$\nu'$ satisfies \eqref{nunu'} because 
it is straightforward that 
$\delta (\nu)=\delta (\nu' )$. 
So we can assume $r_1 =\cdots =r_l <r_{l+1}\leq \cdots\leq r_m$ without loss of generality. 
Then we have 
$$
\left(\frac{\sum_{i=1}^l t_i}{\sum_{i=l+1}^m t_i}\right)
\slash
\left(\frac{\sum_{i=1}^l t_i r_i^2}{\sum_{i=l+1}^m t_i r_i^2}\right)
\geq
\frac{r_{l+1}^2}{r_1 ^2}
\geq
\frac{r_{l+1}}{r_1} .
$$
Hence, if we set 
$$
\nu_0 =
\frac{1}{\frac{r_1}{r_{l+1}}\sum_{i=1}^{l}t_i +\sum_{i=l+1}^m t_i}
\left(
\sum_{i=1}^{l} \frac{r_1 t_i}{r_{l+1}} \mathrm{Dirac}_{\lbrack x_i ,r_{l+1} \rbrack}
+\sum_{i=l+1}^{m} t_i \mathrm{Dirac}_{\lbrack x_i r_i \rbrack}
\right) ,
$$
then we have 
$$
\delta (\nu_0 )\geq\delta (\nu )
$$
by Lemma \ref{alpha}. 
Repeating this procedure, we finally get 
$$
\nu_1 =\sum_{i=1}^{m} s_i \mathrm{Dirac}_{\lbrack x_i ,r_m \rbrack},
$$
which satisfies 
$
\delta (\nu_1 )\geq\delta (\nu ). 
$
If we set 
$
\nu' =\sum_{i=1}^{m} s_i \mathrm{Dirac}_{\lbrack x_i ,1 \rbrack} ,
$
it is easily seen that 
$
\delta (\nu ')=\delta (\nu_1 )
$, 
and the assertion follows. 
\end{proof}

\section{Proof of the theorem}
Recall that the Gromov-Hausdorff precompactness
is known to be equivalent to the uniformly total boundedness. 
We call the family $\mathcal{X}$ 
of metric spaces 
\textit{uniformly totally bounded} 
if the following two conditions are satisfied: 
\begin{itemize}
\item
There is a constant $D$ such that 
$\mathrm{diam}(X)\leq D$ for all $X\in\mathcal{X}$. 
\item
For any $\varepsilon >0$ there exists $N(\varepsilon )\in\mathbb{N}$ such that 
each $X\in\mathcal{X}$ contains a subset $S_{X,\varepsilon}$ with the following 
property: 
the cardinality of $S_{X,\varepsilon}$ is no greater than $N(\varepsilon )$ and 
$X$ is covered by the union of 
all $\varepsilon$-balls whose centers are in $S_{X,\varepsilon}$. 
\end{itemize}

By Lemma \ref{conedelta}, Lemma \ref{product} and Proposition \ref{deltatildedelta}, 
to prove Theorem \ref{main} it suffices to prove 
the following proposition. 

\begin{proposition}\label{pi/12}
Let $(X,d_X)$ be a complete $\mathrm{CAT}(1)$ space. 
Assume that there exist $N \in\mathbb{N}$ and 
a subset $S=\{x_i \}_{i=1}^{N}\subset X$ such that 
$X$ is covered by the union of 
all $\frac{\pi}{12}$-balls whose centers are in $S$.  
Then there exists a constant $C (N)<1$, 
depending only on $N$, 
such that 
$$
\tilde{\delta}(X)
<
C(N). 
$$
\end{proposition}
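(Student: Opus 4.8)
The goal is to bound $\tilde\delta(\mu)$ away from $1$, uniformly, for every $\mu\in\mathcal{P}(X)$ with $\mathrm{bar}(\iota_*\mu)=O_{\mathrm{Cone}(X)}$, where $X$ is covered by $N$ balls of radius $\frac{\pi}{12}$. Recall $\tilde\delta(\mu)=\inf_\phi\|\mathbb{E}_\mu[\phi]\|^2$ over spherical maps $\phi$ with $\angle(\phi(x),\phi(y))\le d_X(x,y)$. So it suffices to \emph{exhibit one such test map $\phi$} for which $\|\mathbb{E}_\mu[\phi]\|^2$ is bounded by an explicit constant $C(N)<1$ depending only on $N$; the hypothesis that we may choose $\phi$ freely makes this a construction problem, not a lower-bound problem.

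First I would fix, for each $i\in\{1,\dots,N\}$, the function $x\mapsto d_X(x_i,x)$ and use it to build coordinates of $\phi$. A clean choice is to send $X$ into a sphere in $\ell^2(\{1,\dots,N\})\oplus\mathbb{R}$ (or a function space) via a "truncated cosine" construction: roughly, let $\phi(x)$ have $i$-th coordinate a suitable normalized multiple of $\cos(\lambda\, d_X(x_i,x))$ or of $\max\{0,\cos(\lambda d_X(x_i,x))\}$ for a small scaling parameter $\lambda$, together with one extra coordinate chosen to make $\|\phi(x)\|\equiv 1$ and to keep the angle condition \eqref{realization-tilde}. The point of the $\frac{\pi}{12}$-net is twofold: (a) it forces $\lambda$-scaled distances to stay in a regime where the required angle inequality $\angle(\phi(x),\phi(y))\le d_X(x,y)$ can be verified by a direct estimate (the $\mathrm{CAT}(1)$ triangle inequality $|d_X(x_i,x)-d_X(x_i,y)|\le d_X(x,y)$ plus a bound on how much a $1$-dimensional cosine map can contract), and (b) it guarantees that every $x$ is within $\frac{\pi}{12}$ of some net point, so on each such point the corresponding coordinate of $\phi(x)$ is uniformly bounded below, which is what will eventually defeat $\|\mathbb{E}_\mu[\phi]\|=1$.

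Next I would invoke Corollary \ref{pi/3-2/3}: since $\mathrm{bar}(\iota_*\mu)=O_{\mathrm{Cone}(X)}$, no single ball of radius $\le\theta<\frac\pi2$ carries more than $\frac{1}{1+\cos\theta}$ of the mass of $\mu$; in particular no $\frac{\pi}{3}$-ball (hence certainly no $\frac{\pi}{12}$-ball) has $\mu$-mass exceeding $\frac{2}{3}$. Combined with the covering by $N$ balls of radius $\frac{\pi}{12}$, this says the mass is "spread out" among the net points with a quantitative floor: one can extract, depending only on $N$, a lower bound on the $\mu$-probability that $d_X(x_i,x)$ and $d_X(x_j,x)$ are simultaneously moderate for two \emph{distinct} net points $x_i,x_j$, i.e. $\mu$ cannot be concentrated near a single point of the sphere after applying $\phi$. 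Quantitatively I would estimate $\|\mathbb{E}_\mu[\phi]\|^2 = \int\!\!\int\langle\phi(x),\phi(y)\rangle\,\mu(dx)\mu(dy)$ from above: split the double integral into the "diagonal-like" part where $x,y$ lie in a common $\frac{\pi}{12}$-ball (there $\langle\phi(x),\phi(y)\rangle\le 1$) and the "off-diagonal" part where $x,y$ lie in different balls (there $\angle(\phi(x),\phi(y))$ is bounded below by a positive constant, so $\langle\phi(x),\phi(y)\rangle\le 1-c$ for an explicit $c=c(\lambda)>0$), then use the mass bounds from Corollary \ref{pi/3-2/3} to show the off-diagonal part has total $\mu\times\mu$-measure bounded below by a positive function of $N$ alone. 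This yields $\|\mathbb{E}_\mu[\phi]\|^2\le 1-c'(N)$, and setting $C(N)=1-\tfrac12 c'(N)$ finishes it.

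\medskip
\textbf{Main obstacle.} The delicate point is engineering the test map $\phi$ so that the angle condition \eqref{realization-tilde} holds \emph{exactly} while simultaneously keeping the off-diagonal inner products bounded strictly below $1$ with a constant that does not degrade as the cone over $X$ gets complicated. Taking $\lambda$ small helps verify the $1$-Lipschitz-on-angles property (cosines become nearly affine), but small $\lambda$ also shrinks the angular separation $c(\lambda)$ between images of points in different $\frac{\pi}{12}$-balls; one must choose $\lambda$ (a universal constant, e.g. tied to $\frac{\pi}{12}$) balancing these, and then carefully check that the extra normalizing coordinate needed to land on the unit sphere does not secretly violate the angle bound. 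Verifying \eqref{realization-tilde} therefore reduces to an elementary but fussy estimate comparing $\angle$ between two unit vectors built from several bounded cosine coordinates against $d_X(x,y)$, using only $|d_X(x_i,x)-d_X(x_i,y)|\le d_X(x,y)$; this is where most of the real work lies, and it is exactly the step the $\frac{\pi}{12}$ radius is calibrated to make go through.
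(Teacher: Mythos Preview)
Your high-level strategy is right: build a single test map $\phi$ out of the distance coordinates $x\mapsto d_X(x_i,x)$, use Corollary~\ref{pi/3-2/3} to prevent concentration, and bound $\|\mathbb{E}_\mu[\phi]\|^2$ by a constant $<1$ depending only on $N$. But two concrete points separate your sketch from a working proof.

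\textbf{The angle condition.} You correctly identify the verification of \eqref{realization-tilde} as the main obstacle, and your proposed ``truncated cosine plus normalizing coordinate'' construction leaves it genuinely unresolved: balancing $\lambda$ so that the angle bound holds while the off-diagonal separation survives is not just fussy, it is unclear it can be done with constants independent of $X$. The paper bypasses this entirely with a standard positive-definite-kernel trick. First set $F_S(x)=(d_X(x,x_1),\dots,d_X(x,x_N))\in\mathbb{R}^N$, which is $\sqrt{N}$-Lipschitz. Then compose with the Gaussian embedding $G_t:\mathbb{R}^N\to\mathrm{Exp}(\mathbb{R}^N)$ into the unit sphere of the symmetric Fock space, for which $\langle G_t(\zeta),G_t(\zeta')\rangle=e^{-t\|\zeta-\zeta'\|^2}$ exactly. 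Lemma~\ref{lipschitz} shows that as soon as $tL^2\le\tfrac12$ (here $t=\tfrac{1}{2N}$, $L=\sqrt{N}$) one has $\angle(\phi(x),\phi(y))\le\min\{\pi,d_X(x,y)\}$ automatically, via the elementary inequality $e^{-ad^2}\ge\cos d$ for $a\le\tfrac12$. No extra normalizing coordinate, no ad hoc balancing: this is the missing ingredient that dissolves your ``main obstacle.''

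\textbf{The decomposition.} Your split of $\int\!\!\int\langle\phi(x),\phi(y)\rangle\,d\mu\,d\mu$ into ``same $\tfrac{\pi}{12}$-ball'' versus ``different $\tfrac{\pi}{12}$-balls'' does not give what you claim: points assigned to different balls of the cover can be arbitrarily close (balls overlap, or are adjacent), so there is no lower bound on $\angle(\phi(x),\phi(y))$ on the off-diagonal piece. The paper instead thresholds on $d_X(x,y)\ge\tfrac{\pi}{3}$: whenever that holds, the $\tfrac{\pi}{12}$-net furnishes two indices $i_0,i_1$ with $d_X(x_{i_0},y)\le\tfrac{\pi}{12}$, $d_X(x_{i_0},x)\ge\tfrac{\pi}{4}$ and symmetrically for $i_1$, forcing $\|F_S(x)-F_S(y)\|\ge\tfrac{\pi}{3\sqrt2}$ and hence $\angle(\phi(x),\phi(y))\ge\eta:=\arccos(e^{-\pi^2/(36N)})$. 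Combining this with Corollary~\ref{pi/3-2/3} gives $\phi_*\mu(B(v,\tfrac{\eta}{2}))\le\tfrac{2}{3}$ for every unit $v$; then a single-integral estimate with $v=\tilde{\mathbb{E}}_\mu[\phi]$ and \eqref{variance} yields $\|\mathbb{E}_\mu[\phi]\|\le\tfrac{2}{3}+\tfrac{1}{3}\cos\tfrac{\eta}{2}<1$. Your double-integral route would also work once the threshold is fixed to be metric ($d_X(x,y)\ge\tfrac{\pi}{3}$) rather than combinatorial (``different balls'').
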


\begin{remark}
It follows from the argument in the proof of Proposition \ref{pi/12}, 
we can take 
$$
C(N)
=
\left(
\frac{2}{3}+
\frac{1}{3}
\sqrt{\frac{e^{-\frac{\pi^2}{36N}}+1}{2}}\right)^2 .
$$
as a constant $C(N)$ in the proposition. 
\end{remark}

Before proving Proposition \ref{pi/12}, we will recall a well-known construction 
of a map from a Hilbert space to the unit sphere in 
another Hilbert space, and 
derive some necessary estimates for them. 
We follow Dadarlat and Guentner \cite{DG} to 
explain this construction. 
Let $\mathcal{H}$ be a Hilbert space. 
Let 
$$
\mathrm{Exp}(\mathcal{H})=
\mathbb{R}\oplus\mathcal{H}\oplus (\mathcal{H}\otimes\mathcal{H})
\oplus(\mathcal{H}\otimes\mathcal{H}\otimes\mathcal{H})
\oplus\cdots ,
$$
and define  $\mathrm{Exp}:\mathcal{H}\to\mathrm{Exp}(\mathcal{H})$ 
by 
$$
\mathrm{Exp}(\zeta )
=
1\oplus\zeta\oplus
\left(\frac{1}{\sqrt{2!}}\zeta\otimes\zeta\right)\oplus
\left(\frac{1}{\sqrt{3!}}\zeta\otimes\zeta\otimes\zeta\right)
\oplus\cdots .
$$ 
For $t>0$, 
define a map $G_t$ from $\mathcal{H}$ to 
$\mathrm{Exp}(\mathcal{H})$ to be 
$$
G_t (\zeta )=
e^{-t\|\zeta\|^2}\mathrm{Exp}(\sqrt{2t}\zeta ).
$$ 
Then simple computation shows that 
\begin{equation}\label{Gt}
\cos\angle(G_t (\zeta ),G_t(\zeta' ))
=
\langle G_t (\zeta ), G_t(\zeta' )\rangle
=
e^{-t\|\zeta-\zeta' \|^2}
\end{equation}
for all $\zeta ,\zeta' \in\mathcal{H}$.
In particular, $\|G_t (\zeta)\| =1$ 
for all $\zeta\in\mathcal{H}$. 
Hence we can regard $G_t$ as a map 
from $\mathcal{H}$ to the unit sphere in 
$\mathrm{Exp}(\mathcal{H})$. 

We need the following estimate to prove Proposition \ref{pi/12}. 

\begin{lemma}\label{lipschitz}
Let $(X,d_X )$ be a metric space, and let 
$F:X\to\mathcal{H}$ be an $L$-Lipschitz map ($L>0$) to 
some Hilbert space. 
Suppose that $0<t L^2\leq\frac{1}{2}$.  
Then the map 
$\phi =G_t \circ F :X\to\mathrm{Exp}(\mathcal{H} )$ 
satisfies 
$$
\angle\left( \phi (x),\phi (y)\right)
\leq\min\{\pi ,d_X (x,y)\}
$$
for all $x,y\in X$. 
\end{lemma}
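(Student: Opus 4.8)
The plan is to reduce everything to the single identity \eqref{Gt}, which computes the cosine of the angle between $G_t(\zeta)$ and $G_t(\zeta')$ exactly, and then use the hypothesis $0<tL^2\le\tfrac12$ together with the $L$-Lipschitz property of $F$ to control $\|F(x)-F(y)\|$. Concretely, I would first observe that since $\|\phi(x)\|=\|G_t(F(x))\|=1$ for every $x$ by the remark following \eqref{Gt}, the quantity $\angle(\phi(x),\phi(y))$ is the unique angle in $[0,\pi]$ whose cosine equals $\langle\phi(x),\phi(y)\rangle$, and by \eqref{Gt} applied to $\zeta=F(x)$, $\zeta'=F(y)$ this cosine is $e^{-t\|F(x)-F(y)\|^2}$. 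So $\angle(\phi(x),\phi(y))=\arccos\!\big(e^{-t\|F(x)-F(y)\|^2}\big)$.

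Next I would set $s=\sqrt{t}\,\|F(x)-F(y)\|$, so that $\angle(\phi(x),\phi(y))=\arccos(e^{-s^2})$, and reduce the claim to the elementary inequality
$$
\arccos\!\big(e^{-s^2}\big)\le s\qquad\text{for all }s\ge 0 .
$$
Indeed, if this holds then, using that $F$ is $L$-Lipschitz and $tL^2\le\tfrac12<\infty$, we get $s=\sqrt{t}\,\|F(x)-F(y)\|\le\sqrt{t}\,L\,d_X(x,y)\le\tfrac{1}{\sqrt2}\,d_X(x,y)\le d_X(x,y)$, and since $\arccos$ takes values in $[0,\pi]$ we also trivially have $\angle(\phi(x),\phi(y))\le\pi$; combining the two bounds gives $\angle(\phi(x),\phi(y))\le\min\{\pi,d_X(x,y)\}$ as required. (Here the hypothesis $tL^2\le\tfrac12$ is exactly what is needed to absorb the factor $\sqrt t L$ into a constant $\le 1$; note the displayed inequality $\arccos(e^{-s^2})\le s$ is in fact valid for all $s\ge0$, so no smallness of $s$ itself is needed — only that $\sqrt tL\,d_X\le d_X$.)

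Finally I would verify the elementary inequality $\arccos(e^{-s^2})\le s$. Writing $u=\arccos(e^{-s^2})\in[0,\pi/2]$, this is equivalent to $e^{-s^2}\ge\cos s$ for $0\le s\le\sqrt{\pi}/2$ (and for larger $s$ the left side exceeds $\pi/2\ge u$ trivially once one is careful, but in fact one shows $e^{-s^2}\ge\cos s$ on all of $[0,\infty)$ where $\cos$ is defined meaningfully for the comparison). The cleanest route is to compare the functions $g(s)=\cos s$ and $h(s)=e^{-s^2}$: both equal $1$ at $s=0$, and a short computation with derivatives (or with the power series $\cos s = 1-\tfrac{s^2}{2}+\cdots$ and $e^{-s^2}=1-s^2+\cdots$, the latter having the smaller quadratic term but this must be checked to persist) shows $h(s)\ge g(s)$ for $s$ in the relevant range; equivalently one checks $\log\cos s\le -s^2$, i.e. $-\log\cos s\ge s^2$, which follows since $\frac{d}{ds}(-\log\cos s)=\tan s\ge s=\frac{d}{ds}(s^2/1)$... — more precisely, $-\log\cos s=\int_0^s\tan r\,dr\ge\int_0^s r\,dr=\tfrac{s^2}{2}$, but this only gives $\arccos(e^{-s^2/2})\le s$, so I would instead argue directly: the inequality $\arccos(e^{-s^2})\le s$ holds because for $s\ge\pi/2$ the right side already exceeds $\pi/2\ge\arccos(\text{anything}\ge 0)$, and for $0\le s\le\pi/2$ it is equivalent to $e^{-s^2}\ge\cos s$, which holds since $\cos s\le e^{-s^2/2}\le e^{-s^2\cos^2(\pi/2\cdot\ldots)}$... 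The main (and only) obstacle is pinning down this one-variable estimate cleanly; I expect the fastest correct version is: $\cos s\le 1-\tfrac{s^2}{2}+\tfrac{s^4}{24}$ and $1-\tfrac{s^2}{2}+\tfrac{s^4}{24}\le e^{-s^2}$ can fail, so instead use $\cos^2 s=\tfrac{1+\cos 2s}{2}\le\tfrac{1+(1-2s^2+\tfrac{2s^4}{3})}{2}=1-s^2+\tfrac{s^4}{3}\le e^{-s^2}\cdot e^{s^2}(1-s^2+\tfrac{s^4}{3})$, and since $e^{-s^2}\ge 1-s^2+\tfrac{s^4}{3}-\cdots$ comparison of series gives $\cos^2 s\le e^{-2s^2}$, i.e. $\cos s\le e^{-s^2}$, hence $\arccos(e^{-s^2})\le s$ on $[0,\pi/2]$; everything else in the lemma is then immediate.
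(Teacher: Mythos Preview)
Your central one-variable inequality $\arccos(e^{-s^2})\le s$ is \emph{false}: it is equivalent (for $s\in[0,\pi/2]$) to $e^{-s^2}\ge\cos s$, but the Taylor expansions $e^{-s^2}=1-s^2+\cdots$ and $\cos s=1-\tfrac{s^2}{2}+\cdots$ show $e^{-s^2}-\cos s=-\tfrac{s^2}{2}+O(s^4)<0$ for small $s>0$. Consequently your final power-series manipulation claiming $\cos^2 s\le e^{-2s^2}$ cannot be right either (it is the same false inequality after taking square roots), and indeed the step ``comparison of series gives $\cos^2 s\le e^{-2s^2}$'' does not follow from what precedes it.

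The irony is that you already proved the correct inequality and discarded it. Your computation $-\log\cos s=\int_0^s\tan r\,dr\ge\int_0^s r\,dr=\tfrac{s^2}{2}$ yields $e^{-s^2/2}\ge\cos s$, i.e.\ $\arccos(e^{-s^2})\le\sqrt{2}\,s$. That is exactly what is needed: combined with your own bound $s=\sqrt{t}\,\|F(x)-F(y)\|\le\sqrt{t}\,L\,d_X(x,y)\le\tfrac{1}{\sqrt2}\,d_X(x,y)$, you get $\angle(\phi(x),\phi(y))\le\sqrt{2}\,s\le d_X(x,y)$, and the bound by $\pi$ is automatic. So the hypothesis $tL^2\le\tfrac12$ is not meant to be weakened to $tL^2\le 1$ as your approach implicitly attempts; the factor $\sqrt{2}$ is essential and is exactly consumed by $\sqrt{t}L\le 1/\sqrt{2}$. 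The paper argues the same point in slightly different packaging: it reduces to $a\le\frac{-\log\cos d}{d^2}$ for $a=tL^2\le\tfrac12$ and $d\in[0,\pi/2)$, and observes that the right-hand side is nondecreasing in $d$ with limit $\tfrac12$ at $d=0$ --- which is again the inequality $-\log\cos d\ge d^2/2$ that you derived.
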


\begin{proof}
By \eqref{Gt} and $L$-Lipschitz continuity of $F$, 
it is sufficient to show that  
\begin{equation}\label{cos}
e^{-tL^2 d_X (x,y)^2}
\geq
\cos\left( \min\{\pi ,d_X (x,y)\}\right)
\end{equation}
for all $x,y\in X$ and all $t\in (0,\frac{1}{2L^2})$. 
When $d_X (x,y)\geq\frac{\pi}{2}$, \eqref{cos} is 
obvious. 
So, if we put $a=tL^2$ and $d=d_X (x,y)$, 
then what we have to show is that 
\begin{equation}\label{ad}
a\leq
\frac{-\log (\cos d)}{d^2}
\end{equation}
holds for any $a\in(0,\frac{1}{2}\rbrack$ and any $d\in\lbrack 0,\frac{\pi}{2} )$. 
But this is obvious 
because the right-hand side of 
\eqref{ad} is non-decreasing with respect to $d$. 
\end{proof}

Now we are ready to prove Proposition \ref{pi/12}. 

\begin{proof}[Proof of Proposition\ref{pi/12}]
First we define a map $F_{S}$ from 
$X$ to $\mathbb{R}^{N}$ by 
$$
F_{S}(x)
=
\left(
d_X (x,x_1),
d_X (x,x_2),
\cdots
d_X (x,x_{N})
\right)
$$
for $x\in X$. 
Then $F_S$ is $\sqrt{N}$-Lipschitz since 
$$
\| F_S (x)-F_S (y)\|
=
\left\{
\sum_{i=1}^N \left( d_X (x,x_i )-d_X (y,x_i)\right)^2
\right\}^{\frac{1}{2}}
\leq
\sqrt{N}\cdot d_X (x,y) .
$$
On the other hand, 
by the definition of the subset $S$, 
for any $x,y\in X$ with $d_X (x,y)\geq\frac{\pi}{3}$, 
there exist  
$i_0 ,i_1\in\{ 1,\cdots N\}$ such that 
\begin{align*}
&d_X (x_{i_0},x)\geq\frac{\pi}{4},\quad d_X (x_{i_0},y)\leq\frac{\pi}{12},\\
&d_X (x_{i_1},y)\geq\frac{\pi}{4},\quad d_X (x_{i_1},x)\leq\frac{\pi}{12} .
\end{align*}
Hence 
\begin{multline}\label{pi/3}
\| F_S (x)-F_S (y)\| \\
\geq
\sqrt{(d_X (x_{i_0},x)-d(x_{i_0},y))^2+(d_X (x_{i_1},x)-d(x_{i_1},y))^2}
\geq \frac{\pi}{3\sqrt{2}} 
\end{multline}
for any $x,y\in X$ with $d_X (x,y)\geq\frac{\pi}{3}$. 

We now set $\phi =G_{\frac{1}{2N}}\circ F_{S}:X\to\mathrm{Exp}(\mathbb{R}^N )$.  
Then the all values of $\phi$ are contained in the unit sphere of $\mathrm{Exp}(\mathbb{R}^N )$, 
and 
$\phi$ satisfies 
$$
\angle\left( \phi (x),\phi (y)\right)
\leq\min\{\pi ,d_X (x,y)\}
$$
for all $x,y\in X$ 
by Lemma \ref{lipschitz}. 
Moreover \eqref{Gt} and \eqref{pi/3} imply that 
\begin{equation}\label{phi}
\angle\left( \phi (x),\phi (y) \right)
\geq
\arccos (e^{-\frac{\pi^2}{36 N}})
\end{equation}
for any $x,y\in X$ with $d_X (x,y)\geq\frac{\pi}{3}$. 

Set $\eta =\arccos (e^{-\frac{\pi^2}{36 N}})$, and  
let $\mu$ 
be an arbitrary measure in $\mathcal{P}(X)$ 
with 
$\mathrm{bar}(\iota_{*}\mu )=O_{\mathrm{Cone}(X)}$, where 
$\iota :X\to\mathrm{Cone}(X)$ is the canonical inclusion and 
$O_{\mathrm{Cone}(X)}$ is the cone point of $\mathrm{Cone}(X)$. 
Then we have 
\begin{equation}\label{eta}
\phi_{*}\mu \left( B\left( v,\frac{\eta}{2}\right)\right)
\leq\frac{2}{3}
\end{equation}
for 
any point $v$ on the unit sphere in 
$\mathrm{Exp}(\mathbb{R}^N )$, 
where 
$$
B\left(v,\frac{\eta}{2}\right)
=
\left\{ u\in\mathrm{Exp}(\mathbb{R}^N )\hspace{1mm}\Big|\hspace{1mm}
\| u\|=1,\hspace{1mm}\angle (v,u )<\frac{\eta}{2}
\right\} .
$$
This is because if there exists some vector $\phi (x_0 )$ contained in 
$B\left(v,\frac{\eta}{2}\right)\cap\phi (X)$, then 
by \eqref{phi} and Corollary \ref{pi/3-2/3} we have 
\begin{align*}
\phi_{*}\mu \left( B\left( v,\frac{\eta}{2}\right)\right)
&\leq
\phi_{*}\mu \left( B\left(\phi(x_0 ),\eta\right)\right)
\\
&=
\mu\left(\phi^{-1}\left(
B\left( \phi (x_0 ),\eta\right)\right)
\right)
\\
&\leq
\mu\left(
B\left( x_0 ,\frac{\pi}{3}\right)
\right)
\leq
\frac{2}{3},
\end{align*}
where $B\left( x_0 ,\frac{\pi}{3}\right)$ is the 
open ball in $X$ centered at 
$x_0$ with radius $\frac{\pi}{3}$. 
In the case $B\left(v,\frac{\eta}{2}\right)\cap\phi (X)=\phi$, 
\eqref{eta} obviously holds. 

By \eqref{eta}, we have
\begin{align*} 
\int_X
\langle v, \phi (x)\rangle\mu (dx)
&=
\int_{\mathcal{S}}\langle v,u\rangle \phi_* \mu (du)
\\
&=
\int_{B(v,\frac{\eta}{2})}
\langle v,u\rangle \phi_* \mu (du)
+
\int_{\mathcal{S}\backslash B(v,\frac{\eta}{2})}
\langle v,u\rangle \phi_* \mu (du)
\\
&\leq
1\times
\phi_* \mu \left( B\left( v,\frac{\eta}{2}\right)\right)
+
\cos\frac{\eta}{2}\times
\left\{
1-\phi_* \mu \left( B\left( v,\frac{\eta}{2}\right)\right)
\right\}
\\
&\leq
1\times\frac{2}{3}+
\left(\cos\frac{\eta}{2}\right)\times\frac{1}{3}
,
\end{align*}
where $\mathcal{S}$ is the unit sphere in $\mathrm{Exp}(\mathbb{R}^N)$.
Setting $v=\tilde{\mathbb{E}}_{\mu}\lbrack\phi\rbrack$ in the 
above inequality and 
using \eqref{variance}, we have 
$$
\|\mathbb{E}_{\mu}\lbrack\phi\rbrack\|
=
\left\|
\int_{X}\langle\tilde{\mathbb{E}}_{\mu}\lbrack\phi\rbrack ,
\phi (x) \rangle \mu (dx)
\right\|
\leq 
c_{N}, 
$$
where
$$
c_{N}
=
1\times\frac{2}{3}+
\left(\cos\frac{\eta}{2}\right)\times\frac{1}{3}
=
\frac{2}{3}+
\frac{1}{3}
\sqrt{\frac{e^{-\frac{\pi^2}{36N}}+1}{2}}
$$
Thus, by the definition of $\tilde{\delta}(X)$, 
$$
\tilde{\delta}(X)
\leq
c_N^2 <1
$$
which proves the proposition. 
\end{proof}

Finally, we remark that the proof of 
Proposition \ref{pi/12} works for 
the following more general statement.  

\begin{proposition}\label{sufficient}
Let $0<\theta<\frac{\pi}{2}$, $0<\alpha <1$ and $\varepsilon >0$. 
Let $(X,d_X)$ be a complete $\mathrm{CAT}(1)$ space. 
Assume that there exists a finite subset 
$S\subset X$ such that  
$$
\#\left\{ s\in S\hspace{1mm}\big|\hspace{1mm} \|d_{X}(x,s)-d_{X}(y,s)\|\geq\varepsilon\right\} \geq\alpha \# S 
$$
whenever $x,y\in X$ and $d(x,y)\geq\theta$. 
Here, $\#S$ stands for the cardinality of $S$. 
Then there exists a constant $C=C (\theta, \alpha ,\varepsilon)<1$ 
such that 
$$
\tilde{\delta}(X)
\leq
C. 
$$
\end{proposition}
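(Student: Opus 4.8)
The plan is to transcribe the proof of Proposition \ref{pi/12} almost verbatim; the only change is that the geometric lower bound that proof extracts from two well-placed centers $x_{i_0},x_{i_1}$ is now furnished directly by the hypothesis, and the constants are tracked in terms of $\theta,\alpha,\varepsilon$ instead of $N$. Write $n=\#S$, $S=\{s_1,\dots,s_n\}$, and define $F_S\colon X\to\mathbb{R}^n$ by $F_S(x)=(d_X(x,s_1),\dots,d_X(x,s_n))$. As in Proposition \ref{pi/12}, the reverse triangle inequality shows $F_S$ is $\sqrt{n}$-Lipschitz, while the hypothesis gives, for all $x,y\in X$ with $d_X(x,y)\ge\theta$,
\[
\|F_S(x)-F_S(y)\|^2=\sum_{j=1}^n\bigl(d_X(x,s_j)-d_X(y,s_j)\bigr)^2\ge\alpha n\,\varepsilon^2 .
\]

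Then I would set $t=\tfrac{1}{2n}$, so that $t(\sqrt{n}\,)^2=\tfrac12$, and put $\phi=G_t\circ F_S\colon X\to\mathrm{Exp}(\mathbb{R}^n)$. By Lemma \ref{lipschitz}, $\phi$ takes values in the unit sphere and satisfies $\angle(\phi(x),\phi(y))\le\min\{\pi,d_X(x,y)\}$, so it meets \eqref{realization-tilde} and is admissible in the definition of $\tilde\delta$. By \eqref{Gt} and the displayed inequality, whenever $d_X(x,y)\ge\theta$,
\[
\cos\angle(\phi(x),\phi(y))=e^{-t\|F_S(x)-F_S(y)\|^2}\le e^{-\alpha\varepsilon^2/2},
\]
hence $\angle(\phi(x),\phi(y))\ge\eta:=\arccos\bigl(e^{-\alpha\varepsilon^2/2}\bigr)>0$. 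Note that $n$ cancels in the exponent, so $\eta$ depends only on $\alpha,\varepsilon$.

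Finally I would run the concentration estimate of Proposition \ref{pi/12}. Fix $\mu\in\mathcal{P}(X)$ with $\mathrm{bar}(\iota_*\mu)=O_{\mathrm{Cone}(X)}$ and a unit vector $v\in\mathrm{Exp}(\mathbb{R}^n)$. If $\phi(x_0)\in B(v,\tfrac\eta2)\cap\phi(X)$, then $B(v,\tfrac\eta2)\cap\phi(X)\subset B(\phi(x_0),\eta)$, and the angle lower bound above forces $\phi^{-1}\bigl(B(\phi(x_0),\eta)\bigr)\subset\{\,y\in X\mid d_X(x_0,y)<\theta\,\}$; since $\theta<\tfrac\pi2$, Corollary \ref{pi/3-2/3} gives $\phi_*\mu\bigl(B(v,\tfrac\eta2)\bigr)\le\tfrac{1}{1+\cos\theta}$ (and this is trivial when $B(v,\tfrac\eta2)\cap\phi(X)=\emptyset$). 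Splitting $\int_X\langle v,\phi(x)\rangle\,\mu(dx)$ over $B(v,\tfrac\eta2)$ and its complement, setting $v=\tilde{\mathbb{E}}_\mu[\phi]$, and using \eqref{variance}, I get
\[
\|\mathbb{E}_\mu[\phi]\|\le\frac{1}{1+\cos\theta}+\Bigl(\cos\tfrac\eta2\Bigr)\frac{\cos\theta}{1+\cos\theta}=\frac{1+\cos\theta\cos\tfrac\eta2}{1+\cos\theta},
\]
which is $<1$ exactly because $\eta>0$. By the definition of $\tilde\delta(X)$ this yields $\tilde\delta(X)\le C$ with $C=C(\theta,\alpha,\varepsilon)=\bigl((1+\cos\theta\cos\tfrac\eta2)/(1+\cos\theta)\bigr)^2$ and $\eta=\arccos\bigl(e^{-\alpha\varepsilon^2/2}\bigr)$.

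I do not expect a genuine obstacle: each step mirrors the proof of Proposition \ref{pi/12}, and the hypothesis is plainly tailored to be exactly the input that proof uses. The one point needing care is that the exponent $t\,\|F_S(x)-F_S(y)\|^2$ be bounded below by a positive quantity independent of $\#S$: this works because the Lipschitz constant $\sqrt{\#S}$ forces $t$ to scale like $1/\#S$, while the hypothesis makes the separation scale like $\sqrt{\#S}$, so the two cancel — which is also why the final constant depends only on $\theta,\alpha,\varepsilon$.
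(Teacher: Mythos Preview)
Your proposal is correct and follows essentially the same route as the paper's proof: define $F_S$, observe it is $\sqrt{n}$-Lipschitz with the separation $\|F_S(x)-F_S(y)\|\ge\sqrt{\alpha n}\,\varepsilon$ supplied directly by the hypothesis, compose with $G_{1/(2n)}$, and then rerun the concentration argument via Corollary~\ref{pi/3-2/3} at the general angle $\theta$. Your final constant agrees with the paper's $c_{\theta,\alpha,\varepsilon}$ once one rewrites $\cos(\eta/2)=\sqrt{(1+e^{-\alpha\varepsilon^2/2})/2}$.
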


\begin{proof}
We denote the cardinality of $S$ by $N$. 
Let $F_S$ be the map from $X$ to $\mathbb{R}^N$ as in the proof of Proposition \ref{pi/12}
with respect to our set $S$. 
Then $F_S$ is $\sqrt{N}$-Lipschitz and we have 
\begin{equation}\label{theta}
\| F_S (x)-F_S (y)\| \\
\geq
\sqrt{\alpha N}\varepsilon
\end{equation}
for any $x,y \in X$ with $d_X (x,y)\geq\theta$. 
If we set $\phi =G_{\frac{1}{2N}}\circ F_{S}:X\to\mathrm{Exp}(\mathbb{R}^N )$, 
then all the values of $\phi$ are contained in the unit sphere of $\mathrm{Exp}(\mathbb{R}^N )$, 
and 
$\phi$ satisfies 
$$
\angle\left( \phi (x),\phi (y)\right)
\leq\min\{\pi ,d_X (x,y)\}
$$
for all $x,y\in X$ 
by Lemma \ref{lipschitz}. 
Moreover \eqref{Gt} and \eqref{theta} imply that 
\begin{equation}\label{phi2}
\angle\left( \phi (x),\phi (y) \right)
\geq
\arccos (e^{-\frac{\alpha\varepsilon^2}{2}})
\end{equation}
for any $x,y\in X$ with $d_X (x,y)\geq\theta$. 

Now the rest of the proof is done exactly in the same manner as in the proof of Proposition \ref{pi/12}, 
and we have 
$$
\tilde{\delta}(X)
\leq 
(c_{\theta ,\alpha ,\varepsilon})^2, 
$$
where
\begin{align*}
c_{\theta ,\alpha ,\varepsilon}
&=
1\times\frac{1}{1+\cos\theta}+
\left(\cos\frac{\arccos (e^{-\frac{\alpha\varepsilon^2}{2}})}{2}\right)\times
\left( 1-\frac{1}{1+\cos\theta}\right) 
\\
&=
\frac{1}{1+\cos\theta}+
\sqrt{\frac{e^{-\frac{\alpha\varepsilon^2}{2}}+1}{2}}\times
\frac{\cos\theta}{1+\cos\theta}  <1.
\end{align*}
\end{proof}

\begin{acknowledgement}
I would like to thank Professor S. Nayatani, Professor K. Fujiwara and Dr. T. Kondo 
for many helpful discussions. 
\end{acknowledgement}

% BibTeX users please use
% \bibliographystyle{}
% \bibliography{}
%
% Non-BibTeX users please use

\end{document}